 \def\kk{\mathbb{K}}
 \def\II{\mathbb{I}}
 \def\NN{\mathbb{N}}
 \def\Tl{\tilde{T}}
 \def\Tv{\mathbf{T}}
 \def\ML{\mathcal{L}}
 \def\ab{\mathbf{a}}
 \def\hb{\mathbf{h}}
 \def\<{\langle}
 \def\>{\rangle}
 \def\lcm{\mathrm{lcm}}
 \def\Oc{\mathcal{O}}
 \def\Osft{\tilde{\mathcal{O}}}
 \newcommand{\Unit}[1]{\mathfrak{U}_{#1}}
 \newcommand{\ov}[1]{\overline{#1}}
 \newcommand{\ud}[1]{\underline{#1}}
\title{Toeplitz and Toeplitz-block-Toeplitz matrices\\and their correlation with syzygies of polynomials}
\author{
Houssam Khalil\thanks{Institut Camille Jordan, 43 boulevard du 11 novembre 1918, 69622 Villeurbanne cedex France ({\tt khalil@math.univ-lyon1.fr}).}\and Bernard Mourrain\thanks{INRIA, GALAAD team, 2004 route des Lucioles, BP 93, 06902 Sophia Antipolis Cedex, France({\tt mourrain@sophia.inria.fr}).}\and Michelle Schatzman\thanks{Institut Camille Jordan, 43 boulevard du 11 novembre 1918, 69622 Villeurbanne cedex France ({\tt schatz@math.univ-lyon1.fr}).}
}
\begin{document}

\maketitle

\begin{abstract}
In this paper, we re-investigate the resolution of Toeplitz systems $T\, u =g$, from a new point of view, by correlating the solution of such problems with syzygies of polynomials or moving lines. We show an explicit connection between the generators of a Toeplitz matrix and the generators of the corresponding module of syzygies. We show that this module is generated by two elements of degree $n$ and the solution of $T\,u=g$ can be reinterpreted as the remainder of an explicit vector depending on $g$, by these two generators.

This approach extends naturally to multivariate problems and we describe 
for Toeplitz-block-Toeplitz matrices, the structure of the corresponding generators. 
\end{abstract}
\begin{keywords} 
Toeplitz matrix, rational interpolation, syzygie
\end{keywords}

\section{Introduction}
Structured matrices appear in various domains, such as scientific computing,
signal processing, \dots They usually express, in a linearize way, a problem
which depends on less parameters than the number of entries of the
corresponding matrix. An important area of research is devoted to the
development of methods for the treatment of such matrices, which depend on
the actual parameters involved in these matrices.

Among well-known structured matrices, Toeplitz and Hankel structures have
been intensively studied \cite{MR782105, MR1355506}. Nearly optimal
algorithms are known for the multiplication or the resolution of linear
systems, for such structure. Namely, if $A$ is a Toeplitz matrix of size $n$,
multiplying it by a vector or solving a linear system with $A$ requires
$\Osft(n)$ arithmetic operations (where $\Osft(n)=\Oc(n \log^{c}(n))$ for
some $c>0$) \cite{LDRBA80, MR1871324}. Such algorithms are called super-fast,
in opposition with fast algorithms requiring $\Oc(n^{2})$ arithmetic
operations.

The fundamental ingredients in these algorithms are the so-called generators
\cite{MR1355506}, encoding the minimal information stored in these matrices,
and on which the matrix transformations are translated.  The correlation with
other types of structured matrices has also been well developed in the
literature \cite{MR1843842,MR1755557}, allowing to treat so efficiently other
structures such as Vandermonde or Cauchy-like structures.

Such problems are strongly connected to polynomial problems 
\cite{LDRFuh96b, MR1289412}. For instance, the product of a Toeplitz matrix by a
vector can be deduced from the product of two univariate polynomials, and
thus can be computed efficiently by evaluation-interpolation techniques,
based on FFT. The inverse of a Hankel or Toeplitz matrix is connected to the
Bezoutian of the polynomials associated to their generators.

However, most of these methods involve univariate polynomials. So far, few
investigations have been pursued for the treatment of multilevel structured
matrices \cite{LDRTyr85}, related to multivariate problems. Such linear
systems appear for instance in resultant or in residue
constructions, in normal form computations, or more generally in
multivariate polynomial algebra. We refer to \cite{MR1762401} for a
general description of such correlations between multi-structured matrices 
and multivariate polynomials. Surprisingly, they also appear in numerical
scheme and preconditionners.
A main challenge here is to devise super-fast algorithms of complexity
$\Osft(n)$ for the resolution of multi-structured systems of size $n$.

In this paper, we consider block-Toeplitz matrices, where each block is a
Toeplitz matrix. Such a structure, which is the first step to multi-level
structures, is involved in many bivariate problems, or in numerical
linear problems.We re-investigate first the resolution of Toeplitz systems
$T\, u =g$,
from a new point of view, by correlating the solution of such problems with
syzygies of polynomials or moving lines. We show an explicit connection
between the generators of a Toeplitz matrix and the generators of the
corresponding module of syzygies. 
We show that this module is generated by two elements of degree $n$ and the
solution of $T\,u=g$ can be reinterpreted as the remainder of an explicit
vector depending on $g$, by these two generators.

This approach extends naturally to multivariate problems and we describe 
for Toeplitz-block-Toeplitz matrices, the structure of the corresponding generators. 
In particular, we show the known result that the module of syzygies of
$k$ non-zero bivariate polynomials is free of rank $k-1$, by a new elementary
proof.

Exploiting the properties of moving lines associated to Toeplitz matrices, we give a new point of view to resolve a Toeplitz-block-Toeplitz system.

In the next section we studie the scalar Toeplitz case. In the chapter 3 we consider the Toeplitz-block-Toeplitz case.

Let $R=\kk[x]$. For $n \in \NN$, we denote by $\kk[x]_{n}$ the vector space of polynomials of
degree $\le n$. Let $L=\kk[x,x^{-1}]$ be the set of Laurent
polynomials in the variable $x$. For any polynomial $p=\sum_{i=-m}^{n}
p_{i}\, x^{i} \in L$, we denote by $p^{+}$ the sum of terms with positive
exponents: $p^{+}=\sum_{i=0}^{n} p_{i}\, x^{i}$ and by $p^{-}$, the sum of terms
with strictly negative exponents: $p^{-}=\sum_{i=-m}^{-1} p_{i}\, x^{i}$.
We have $p=p^{+} +p^{-}$.

For $n\in \NN$, we denote by $\Unit{n}=\{\omega; \omega^{n}=1\}$ the set of roots of unity of
order $n$.

\section{Univariate case}
We begin by the univariate case and the following problem:
\begin{problem} Given a Toeplitz matrix $T=(t_{i-j})_{i,j=0}^{n-1}\in
\kk^{n\times n}$  ($T=(T_{ij})_{i,j=0}^{n-1}$ with $T_{ij}=t_{i-j}$) of size
$n$ and $g=(g_0,\dots,g_{n-1}) \in \kk^{n}$, find $u=(u_0,\dots,u_{n-1}) \in
\kk^{n}$ such that 
\begin{equation}\label{pb:toep}
T\,u=g.
\end{equation}
\end{problem}

Let $E=\{1,\dots,x^{n-1}\}$, and $\Pi_{E}$ be the projection of
$R$ on the vector space generated by $E$, along $\<x^{n},x^{n+1},\ldots\>$.   
\begin{definition}
We define the following polynomials: 
\begin{itemize}
 \item $T(x)=\displaystyle\sum _{i=-n+1}^{n-1}t_ix^i,$
 \item $\tilde{T}(x)=\displaystyle\sum_{i=0}^{2n-1}\tilde{t}_ix^i$ with
$\tilde{t}_i=\left\{
\begin{array}{ll} t_i&\textrm{ if } i< n\\ 
t_{i-2n}&\textrm{ if } i\ge n
\end{array}\right.$,
 \item  $u(x)=\displaystyle\sum_{i=0}^{n-1}u_ix^i,\:g(x)=\sum_{i=0}^{n-1}g_ix^i$.
\end{itemize}
\end{definition}

\noindent{}Notice that $\tilde{T}= T^{+} + x^{2\,n}\, T^{-}$ and
$T(w)=\tilde{T}(w)$ if $w \in \Unit{2\,n}$. We also have (see \cite{MR1762401})
$$
T\,u=g\Leftrightarrow\Pi_{E}(T(x)u(x))=g(x).
$$
For any polynomial $u \in \kk[x]$ of degree $d$, we denote it as $u(x)=
\ud{u}(x) + x^{n} \ov{u}(x)$ with $\deg(\ud{u})\le n-1$ and $\deg(\ov{u}) \le
d-n$ if $d\ge n$ and $\ov{u}=0$ otherwise.
Then, we have 
\begin{eqnarray}
T(x)\, u(x) &=& T(x) \ud{u}(x) + T(x) x^{n} \ov{u}(x) \nonumber \\
& = & \Pi_{E}(T(x) \ud{u}(x)) + \Pi_{E}(T(x) x^{n} \ov{u}(x)) \nonumber \\
&   & + (\alpha_{-n+1}x^{-n+1}+\dots+\alpha_{-1}x^{-1}) \nonumber \\
&   & + (\alpha_{n}x^n+\dots+\alpha_{n+m}x^{n+m}) \nonumber \\ \label{eq:TU1}
 &=& \Pi_{E}(T(x) \ud{u}(x)) + \Pi_{E}(T(x) x^{n} \ov{u}(x)) \nonumber \\
& & + x^{-n+1} A(x) + x^{n} B(x),
\end{eqnarray}
with $m= \max(n-2, d-1),$
\begin{eqnarray}\label{eq:TU2}
&& A(x) = \alpha_{-n+1}+\dots+\alpha_{-1}x^{n-2}, \nonumber \\
&& B(x) = \alpha_{n}+\dots+\alpha_{n+m}x^{m}.
\end{eqnarray}
See \cite{MR1762401} for more details, on the correlation between structured
matrices and (multivariate) polynomials.

\subsection{Moving lines and Toeplitz matrices}
We consider here another problem, related to interesting questions in
Effective Algebraic Geometry.
\begin{problem}\label{pb:mvlines}
Given three polynomials $a, b, c \in R$  respectively of degree $<l, <m, <n$,
find three polynomials  $p, q, r \in R$ 
of degree $< \nu-l, <\nu-m, <\nu-n$, such that
\begin{equation} \label{eq:mvlines}
a(x)\, p(x) + b(x)\, q(x) + c(x)\, r(x) =0.
\end{equation}
\end{problem}
We denote by $\ML(a,b,c)$  the set of $(p,q,r)\in\kk[x]^{3}$ which are solutions of \eqref{eq:mvlines}.
It is a $\kk[x]$-module of $\kk[x]^{3}$. The solutions of the problem \eqref{pb:mvlines} are $\ML(a,b,c) \cap
\kk[x]_{\nu-l-1}\times\kk[x]_{\nu-m-1}\times\kk[x]_{\nu-n-1}$. 

Given a new polynomial $d(x)\in \kk[x]$, we denote by $\ML(a,b,c;d)$ the set of
$(p,q,r)\in\kk[x]^{3}$ such that 
$$ 
a(x)\, p(x) + b(x)\, q(x) + c(x)\, r(x) = d(x).
$$
\begin{theorem}
For any non-zero vector of polynomials $(a,b,c)\in \kk[x]^{3}$, the
$\kk[x]$-module $\ML(a,b,c)$ is free of rank $2$.
\end{theorem}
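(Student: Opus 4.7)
The plan is to realise $\ML(a,b,c)$ as the kernel of the $\kk[x]$-linear map $\phi:\kk[x]^3\to\kk[x]$ defined by $\phi(p,q,r)=a\,p+b\,q+c\,r$, and then exploit the fact that $\kk[x]$ is a principal ideal domain. The image of $\phi$ is the ideal $(a,b,c)$; since $\kk[x]$ is a PID, this equals $(d)$ with $d=\gcd(a,b,c)$, and by the non-vanishing hypothesis $d\ne 0$. Multiplication by $d$ is a $\kk[x]$-module isomorphism $\kk[x]\xrightarrow{\sim}(d)$, so we obtain the short exact sequence
$$
0\longrightarrow \ML(a,b,c)\longrightarrow \kk[x]^3 \stackrel{\phi}{\longrightarrow}\kk[x]\longrightarrow 0.
$$

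A Bezout relation $a\,u+b\,v+c\,w=d$ yields a $\kk[x]$-linear section $e\mapsto e\,(u,v,w)$ of $\phi$, so the sequence splits and $\ML(a,b,c)\oplus\kk[x]\cong\kk[x]^3$. Because $\kk[x]$ is a PID, every submodule of a finitely generated free module is free, so $\ML(a,b,c)$ is free; comparing ranks in the decomposition gives $\mathrm{rank}\,\ML(a,b,c)=3-1=2$.

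Since $\kk[x]$ is a PID, this argument is essentially automatic: the only real input is the gcd/Bezout computation, and there is no serious obstacle to overcome. If one wants an explicit basis in the spirit of the rest of the paper, I would reduce to the case $\gcd(a,b,c)=1$, set $d_1=\gcd(a,b)$, write $a=d_1\,a'$, $b=d_1\,b'$ with $\gcd(a',b')=1$ (hence $\gcd(d_1,c)=1$), choose $\alpha,\beta$ with $\alpha\,a'+\beta\,b'=1$, and propose the two syzygies $(b',-a',0)$ and $(\alpha\,c,\beta\,c,-d_1)$. Verifying that these generate reduces to a divisibility step: given $(p,q,r)$ with $a\,p+b\,q+c\,r=0$, use $\gcd(d_1,c)=1$ to get $d_1\mid r$, subtract the appropriate multiple of the second proposed generator to kill the third component, and conclude using the fact that the syzygy module of the coprime pair $(a',b')$ is generated by $(b',-a')$. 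The mildly delicate point is this last reduction; everything else is bookkeeping of the Bezout identities.
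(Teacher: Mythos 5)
Your proof is correct, and it reaches the conclusion by a somewhat different route from the paper's. The paper invokes Hilbert's syzygy theorem to assert that the ideal $I=(a,b,c)$ admits a free resolution $0\to\kk[x]^p\to\kk[x]^3\to\kk[x]\to\kk[x]/I\to 0$, then reads off $p=2$ by counting ranks (the alternating sum of ranks vanishes and $\kk[x]/I$ has rank zero since $I\neq 0$). You instead exploit the PID structure of $\kk[x]$ concretely: you identify the image $(a,b,c)$ with the principal ideal $(d)$, $d=\gcd(a,b,c)\neq 0$, form the short exact sequence $0\to\ML(a,b,c)\to\kk[x]^3\to(d)\to 0$, split it with a B\'ezout section, and invoke the fact that a submodule of a finitely generated free module over a PID is free. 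Logically the two are close --- for $\kk[x]$, ``global dimension one'' and the PID property are two faces of the same fact --- but your version is more elementary and self-contained, avoiding the citation of Hilbert's theorem, and your sketched explicit basis $(b',-a',0)$, $(\alpha c,\beta c,-d_1)$ in terms of gcds and B\'ezout coefficients is genuine added value: the paper's argument is purely existential and only constructs generators later for the specific module $\ML(\Tl(x),x^n,x^{2n}-1)$, not in this generality. One small tidying remark: the splitting is not needed to obtain freeness of $\ML(a,b,c)$ (the PID submodule fact already gives that directly); its real contribution is pinning the rank at $2$, which you could also deduce without splitting from $\kk[x]^3/\ML(a,b,c)\cong(d)\cong\kk[x]$ and additivity of rank along a short exact sequence of modules over an integral domain.
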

\begin{proof}
By the Hilbert's theorem, the ideal $I$ generated by $(a,b,c)$ has a free
resolution of length at most $1$, that is of the form:
$$
0\rightarrow\kk[x]^p\rightarrow \kk[x]^3\rightarrow \kk[x] \rightarrow \kk[x]/I \rightarrow 0.
$$ 
As $I\neq 0$, for dimensional reasons, we must have $p=2$.
\end{proof}
\begin{definition} A $\mu$-base of $\ML(a,b,c)$ is a basis $(p,q,r)$,
$(p',q',r')$ of $\ML(a,b,c)$, with $(p,q,r)$ of minimal degree $\mu$.
\end{definition}

Notice if $\mu_1$ is the smallest degree of a generator and $\mu_2$ the degree of the second generator $(p',q',r')$, we have $d=max(\deg(a),\deg(b),\deg(c))=\mu_1+\mu_2$. Indeed, we have
\begin{eqnarray*}
\lefteqn{0 \rightarrow \kk[x]_{\nu-d-\mu_1} \oplus \kk[x]_{\nu-d-\mu_2} \rightarrow} \\
& &\kk[x]_{\nu-d}^3\rightarrow \kk[x]_{\nu} \rightarrow \kk[x]_{\nu}/(a,b,c)_{\nu} \rightarrow 0,
\end{eqnarray*}
for $\nu >> 0$. As the alternate sum of the dimension of the $\kk$-vector spaces is zero and $\kk[x]_{\nu}/(a,b,c)_{\nu}$ is $0$ for $\nu >> 0$, we have
\begin{eqnarray*}
0 & = & 3\,(d-\nu-1)  +\nu -\mu_1- d +1 + \nu -\mu_2 -d +1 + \nu +1\\
& = & d -\mu_1 -\mu_2.
\end{eqnarray*}
For $\ML(\Tl(x), x^{n}, x^{2n}-1)$, we have $\mu_1+\mu_2=2\,n$. 
We are going to show now that in fact $\mu_1=\mu_2=n$: 
\begin{proposition}\label{prop:ML}
The $\kk[x]$-module $\ML(\Tl(x), x^{n}, x^{2n}-1)$ has a $n$-basis.
\end{proposition}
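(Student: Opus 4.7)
The preceding dimension count already gives $\mu_1+\mu_2 = d = 2n$, and the convention $\mu_1 \le \mu_2$ forces $\mu_1 \le n$. So, under the standing hypothesis that $T$ is invertible, the content of the proposition is the reverse inequality $\mu_1 \ge n$: that no nonzero $(p,q,r) \in \ML(\Tl, x^n, x^{2n}-1)$ has $\max(\deg p, \deg q, \deg r) < n$. My plan is to reduce this to the non-vanishing of a Sylvester-like determinant which, pleasantly, turns out to equal $\det T$.

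The linear map whose kernel is exactly the space of syzygies of degree $\le n-1$ is
\[
\phi: \kk[x]_{n-1}^3 \longrightarrow \kk[x]_{3n-1}, \qquad (p,q,r) \mapsto \Tl\,p + x^n\,q + (x^{2n}-1)\,r.
\]
Source and target both have dimension $3n$, so it suffices to show that the matrix $M$ of $\phi$ in the monomial bases satisfies $\det M \ne 0$.

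To compute $\det M$, I would split the $3n$ columns into three blocks of size $n$ (one per generator) and the $3n$ rows into the three blocks $R_0 = \{0,\dots,n-1\}$, $R_1 = \{n,\dots,2n-1\}$, $R_2 = \{2n,\dots,3n-1\}$. The middle column-block (multiplication by $x^n$) is $I_n$ on $R_1$ and zero elsewhere; the right column-block (multiplication by $x^{2n}-1$) is $-I_n$ on $R_0$, zero on $R_1$, and $I_n$ on $R_2$. For the left column-block, the entry in row $j$, column $i$ is $\tilde{t}_{j-i}$; a short case analysis using $\tilde{t}_k = t_{k-2n}$ for $k\ge n$ shows that this block contributes the lower-triangular Toeplitz part $L$ of $T$ (with $t_0$ on the diagonal) on rows $R_0$, and the strictly upper-triangular part $U$ of $T$ on rows $R_2$. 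Using the central $I_n$ to clear the $R_1$-entries of the left block by column operations and then moving the middle row/column block to the corner, I get
\[
\det M \;=\; \det\begin{pmatrix} L & -I_n \\ U & I_n \end{pmatrix} \;=\; \det(L+U) \;=\; \det T,
\]
where the middle equality is the standard block determinant formula ($I_n$ being invertible and commuting with $U$) and the last equality is immediate since $L+U$ reconstructs $T$ entry by entry.

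Since $T$ is invertible, $\det M \ne 0$, so $\phi$ is injective and $\mu_1 \ge n$. Combined with $\mu_1 \le n$, this gives $\mu_1 = \mu_2 = n$ and hence an $n$-basis. The main technical hurdle is the case analysis identifying $L$ and $U$ inside the left column-block of $M$; that is where the ``circulant extension'' structure of $\Tl$ enters crucially. Everything else is routine block manipulation.
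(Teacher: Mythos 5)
Your proof is essentially the same as the paper's: you build the $3n\times 3n$ matrix of the map $(p,q,r)\mapsto \Tl p + x^n q + (x^{2n}-1)r$, identify the same block structure (your $L$ and $U$ are the paper's $T_0$ and $T_2$), and exploit $L+U=T$ to conclude invertibility from that of $T$. The paper phrases this as a row reduction giving rank $3n$ rather than an explicit $\det M = \det T$ computation, but the key structural observation and the logic are identical.
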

\begin{proof}
Consider the map
\begin{eqnarray}\label{syzfunction1}
\kk[x]_{n-1}^3 &\rightarrow & \kk[x]_{3n-1}\\
(p(x),q(x),r(x))  & \mapsto & \Tl(x) p(x) + x^{n} q(x) +(x^{2n}-1) r(x)\nonumber
\end{eqnarray}
which $3n \times 3n$ matrix is of the form 
\begin{equation}\label{form:S}
S:= \left(
\begin{array}{c|c|c}
T_{0} &\mathbf{0}  &  -\II_{n} \\ 
T_{1} & \II_{n}    &  \mathbf{0} \\
T_{2} & \mathbf{0} &  \ \, \II_{n}  \\ 
\end{array}
\right).
\end{equation}
where $T_{0}, T_{1}, T_{2}$ are the coefficient 
matrices of $(\Tl(x)$, $x\, \Tl(x)$, $\ldots,$ $x^{n}\Tl(x))$, respectively for the
list of monomials $(1,\ldots,x^{n-1})$, $(x^{n},\ldots,x^{2n-1})$, 
$(x^{2n},\ldots, x^{3n-1})$. Notice in particular that
$T= T_{0}+T_{2}$

Reducing the first rows of $(T_{0}| \mathbf{0} | -\II_{n})$ by the
last rows $(T_{2}| \mathbf{0} | \II_{n})$, we replace it by the block
$(T_{0}+T_{2}| \mathbf{0} | \mathbf{0})$, without
changing the rank of $S$. As $T=T_{0}+T_{2}$ is invertible, this shows that the matrix
$S$ is of rank $3n$. Therefore, there is no syzygies in degree $n-1$.
As the sum $2n=\mu_1+\mu_2$ and $\mu_{1}\le n,\mu_{2}\le n$ where
$\mu_1,\mu_2$ are the smallest degree of a pair of  generators of
$\ML(\Tl(x), x^{n}, x^{2n}-1)$ of degree $\le n$, we have
$\mu_1=\mu_2=n$. Thus there exist two linearly independent syzygies
$(u_1,v_1,w_1)$, $(u_2,v_2,w_2)$ of degree $n$, which generate $\ML(\Tl(x), x^{n},
x^{2n}-1)$.
\end{proof}

A similar result can also be found in \cite{MR1871324}, but the proof  much
longer than this one, is based on interpolation techniques and explicit computations. 
Let us now describe how to construct explicitly two generators of
$\ML(\Tl(x), x^{n}, x^{2n}-1)$ of degree $n$ (see also \cite{MR1871324}). 

As $\Tl(x)$ is of degree $\le 2\,n -1$ and the map \eqref{syzfunction1} is a surjective function, there exists $(u,v,w) \in \kk[x]_{n-1}^3$ such that 
\begin{equation}\label{base1}
\Tl(x) u(x) + x^n v(x) + (x^{2\,n}-1)\, w = \Tl(x) x^n,
\end{equation}
we deduce that $(u_1,v_1,w_1)=(x^n-u, -v, -w) \in \ML(\Tl(x), x^{n}, x^{2n}-1)$.

As there exists $(u',v',w') \in \kk[x]_{n-1}^3$ such that 
\begin{equation}\label{base2}
\Tl(x) u'(x) + x^n v'(x) + (x^{2\,n}-1)\, w' =1 = x^n\, x^n - (x^{2\,n}-1)
\end{equation}
we deduce that $(u_2,v_2,w_2)=(-u',x^n -v', -w' - 1) \in \ML(\Tl(x), x^{n}, x^{2n}-1)$.

Now, the vectors $(u_1,v_1,w_1)$, $(u_2,v_2,w_2)$ of $\ML( \Tl(x),x^{n},x^{2n}-1)$
are linearly independent since by construction, the coefficient vectors of
$x^{n}$ in $(u_1,v_1,w_1)$ and $(u_2,v_2,w_2)$ are respectively $(1,0,0)$ and $(0,1,0)$.

\begin{proposition}\label{division}
The vector $u$ is solution of \eqref{pb:toep}  if and only if  there exist
$v(x) \in \kk[x]_{n-1}, w(x) \in \kk[x]_{n-1}$ such that 
$$
(u(x), v(x), w(x)) \in \ML(\tilde{T}(x), x^{n}, x^{2n}-1; g(x) )
$$
\end{proposition}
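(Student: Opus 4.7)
The plan is to verify the equivalence by a direct polynomial block decomposition, exploiting the defining identity $\tilde T(x) = T^{+}(x) + x^{2n} T^{-}(x)$.

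First I would observe that, for $u \in \kk[x]_{n-1}$, the product $\tilde T(x)\,u(x)$ is a polynomial of degree $\le 3n-2$ and admits a unique decomposition into disjoint degree blocks
$$\tilde T(x)\,u(x) = P_{0}(x) + x^{n}\,Q_{1}(x) + x^{2n}\,Q_{2}(x)$$
with $\deg P_{0} \le n-1$, $\deg Q_{1} \le n-1$, $\deg Q_{2} \le n-2$. A quick support check shows that $T^{+}u$ lives in degrees $[0, 2n-2]$ while $x^{2n} T^{-}u$ lives in $[n+1, 3n-2]$; hence the low block of $\tilde T\,u$ reduces to $P_{0} = \Pi_{E}(T^{+}u)$, and the top block reduces to $x^{2n}(T^{-}u)^{+}$, so $Q_{2} = (T^{-}u)^{+}$.

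The key link with the Toeplitz system is the identity
$$\Pi_{E}(T\,u) = \Pi_{E}(T^{+}u) + (T^{-}u)^{+} = P_{0} + Q_{2},$$
where I use that $(T^{-}u)^{+}$ already has degree $\le n-2$. Since $T\,u = g$ is equivalent to $\Pi_{E}(T\,u) = g$, it is equivalent to the single polynomial identity $P_{0} + Q_{2} = g$.

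Finally I would analyze the equation $\tilde T(x)\,u(x) + x^{n} v(x) + (x^{2n}-1)\,w(x) = g(x)$ for $v, w \in \kk[x]_{n-1}$. Substituting the block decomposition and regrouping by degree yields
$$(P_{0} - w) + x^{n}(Q_{1} + v) + x^{2n}(Q_{2} + w) = g.$$
Since $\deg v, \deg w \le n-1$, the three grouped polynomials sit in the disjoint degree ranges $[0, n-1]$, $[n, 2n-1]$, $[2n, 3n-1]$. Matching each block against the corresponding block of $g$ forces $w = P_{0} - g$, $v = -Q_{1}$, and $w = -Q_{2}$, so such a pair $(v, w)$ exists iff the two expressions for $w$ coincide, i.e.\ iff $P_{0} + Q_{2} = g$, which by the previous step is exactly $T\,u = g$. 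The main obstacle is purely bookkeeping: tracking the supports of $T^{+}u$ and $x^{2n} T^{-}u$ precisely enough to pin down $Q_{2} = (T^{-}u)^{+}$, and confirming that the candidate $v = -Q_{1}$, $w = -Q_{2}$ really respect the degree bound $n-1$.
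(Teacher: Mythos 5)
Your argument is correct, and it takes a genuinely different route from the paper's. The paper proves the forward direction by starting from the decomposition $T(x)u(x) = \Pi_{E}(Tu) + x^{-n+1}A(x) + x^{n}B(x)$ of equations \eqref{eq:TU1}--\eqref{eq:TU2}, then evaluating at the $2n$-th roots of unity $\omega\in\Unit{2n}$ (using $\omega^{-n}=\omega^{n}$ and $\Tl(\omega)=T(\omega)$) to pass from $T$ to $\tilde T$; the converse is handled separately via the block structure of the $3n\times 3n$ matrix $S$ in \eqref{form:S}, extracting $T_{2}u+w=0$ and $T_{0}u-w=g$ and summing. You instead stay entirely on the polynomial side and work directly with $\tilde T(x) = T^{+}(x) + x^{2n}T^{-}(x)$: the support bounds on $T^{+}u$ (range $[0,2n-2]$) and $x^{2n}T^{-}u$ (range $[n+1,3n-2]$) pin down $P_{0}=\Pi_{E}(T^{+}u)$ and $Q_{2}=(T^{-}u)^{+}$, and then matching the three disjoint degree blocks $[0,n-1]$, $[n,2n-1]$, $[2n,3n-1]$ of $\tilde T u + x^{n}v + (x^{2n}-1)w$ against $g$ forces $v=-Q_{1}$ and the two compatible determinations $w=P_{0}-g=-Q_{2}$, giving the equivalence $P_{0}+Q_{2}=\Pi_{E}(Tu)=g$ in one stroke. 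Your version is more elementary (no evaluation at roots of unity, no appeal to the matrix $S$), treats both directions symmetrically in a single block-matching analysis, and yields explicit closed formulas for $v$ and $w$ in terms of $u$. The paper's root-of-unity argument, on the other hand, exposes the interpolation interpretation that the authors exploit later (in the rational interpolation approach to computing the $n$-basis), so the two proofs emphasize complementary aspects of the same identity.
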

\begin{proof}
The vector $u$ is solution of \eqref{pb:toep} if and only if we have 
$$ 
\Pi_{E}(T(x)u(x))=g(x).
$$
As $u(x)$ is of degree $\le n-1$, we deduce from \eqref{eq:TU1} and
\eqref{eq:TU2} that there exist polynomial $A(x) \in \kk[x]_{n-2}$ and
$B(x) \in \kk[x]_{n-1}$ such that 
$$ 
T(x)u(x) - x^{-n+1} A(x) - x^{n} B(x) = g(x).
$$
By evaluation at the roots $\omega \in \Unit{2n}$, and since $\omega^{-n}
= \omega^{n}$ and $\Tl(\omega)=T(\omega)$ for $\omega\in \Unit{n}$, we have
$$ 
\Tl(\omega) u(\omega) + \omega^{n} v(\omega) = g(\omega),
\forall \omega \in \Unit{2n}(\omega),
$$
with $v(x)= -x\, A(x)-B(x)$ of degree $\le n-1$.
We deduce that there exists
$w(x)\in\kk[x]$ such that
$$ 
\Tl(x) u(x) + x^{n} v(x) + (x^{2n}-1) w(x)= g(x).
$$
Notice that $w(x)$ is of degree $\le n-1$, because $(x^{2n}-1)\, w(x)$ is of
degree $\le 3n-1$.

Conversely,  a solution $(u(x), v(x), w(x)) \in \ML(\tilde{T}(x),x^{n},x^{2n}-1; g(x) )\cap \kk[x]_{n-1}^{3}$ implies a solution $(u,v,w)\in \kk^{3\,n}$ of the 
linear system:
$$ 
S 
\, \left(
\begin{array}{c}
u\\
v\\
w\\
\end{array}
\right)
=
\left(
\begin{array}{c}
g\\
0\\
0\\
\end{array}
\right)
$$
where $S$ is has the block structure \eqref{form:S},
so that $T_{2}\, u + w =0$ and $T_{0}\, u - w = (T_{0}+T_{2}) u=g$.
As we have $T_{0}+T_{2}=T$, the vector $u$ is a solution of \eqref{pb:toep},
which ends the proof of the proposition.
\end{proof}
 
\subsection{Euclidean division}

As a consequence of proposition \ref{prop:ML}, we have the following property:

\begin{proposition}\label{divi}
Let $\{(u_1,v_1,w_1),(u_2,v_2,w_2)\}$ a $n$-basis of $\ML(\Tl(x),x^{n},x^{2n}-1)$, the
remainder of the division of 
$\begin{pmatrix}0\\x^n\,g\\g\end{pmatrix}$ by $\begin{pmatrix}
u_1&u_2\\v_1&v_2\\w_1&w_2\end{pmatrix}$ is the vector solution given in the
proposition \eqref{division}. 
\end{proposition}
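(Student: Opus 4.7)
The plan is to view the division as a reduction in the free $\kk[x]$-module $\kk[x]^3$ modulo the syzygy submodule $\ML(\tilde{T},x^n,x^{2n}-1)$, which by Proposition~\ref{prop:ML} is freely generated by the two $n$-basis elements $(u_1,v_1,w_1)$ and $(u_2,v_2,w_2)$, both of degree exactly $n$.

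First I would check by direct computation that the input column vector (with sign $-g$ in the third entry, as required to match the convention of Proposition~\ref{division}) is a particular preimage of $g$ under the module morphism $\varphi\colon(p,q,r)\mapsto \tilde{T}\,p+x^n q+(x^{2n}-1)\,r$; indeed one verifies $x^n\cdot x^n g-(x^{2n}-1)g=g$. By Proposition~\ref{division}, the triple $(u,v,w)\in\kk[x]_{n-1}^3$ associated to the solution $u$ of $Tu=g$ is another preimage of $g$ under $\varphi$, so the difference $(0,x^n g,-g)-(u,v,w)$ lies in $\ker\varphi=\ML(\tilde{T},x^n,x^{2n}-1)$.

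By Proposition~\ref{prop:ML} this difference can therefore be written as $h_1(u_1,v_1,w_1)+h_2(u_2,v_2,w_2)$ for some $h_1,h_2\in\kk[x]$. Rearranging yields the division identity with remainder $(u,v,w)\in\kk[x]_{n-1}^3$, which is precisely the triple from Proposition~\ref{division}.

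Finally I would confirm uniqueness of the remainder: any two admissible remainders in $\kk[x]_{n-1}^3$ would differ by an element of $\ML(\tilde{T},x^n,x^{2n}-1)\cap\kk[x]_{n-1}^3$, which must vanish since by Proposition~\ref{prop:ML} the smallest nonzero syzygy has degree exactly $n$. The main point — really a conceptual clarification rather than a genuine obstacle — is that Proposition~\ref{prop:ML} supplies both the generation (existence of $h_1,h_2$) and the vanishing of small-degree syzygies (uniqueness of the remainder), making an explicit Gr\"obner-style termination argument for the division algorithm unnecessary.
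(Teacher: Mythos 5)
Your proof is correct and follows essentially the same route as the paper's: identify $(0,x^n g,-g)$ as a particular solution, observe that the remainder of its division by the basis matrix lands in $\kk[x]_{n-1}^3\cap\ML(\Tl,x^n,x^{2n}-1;g)$, and conclude by the uniqueness coming from $\ML(\Tl,x^n,x^{2n}-1)\cap\kk[x]_{n-1}^3=\{0\}$ (Proposition~\ref{prop:ML}). Your sign correction ($-g$ rather than $g$ in the third coordinate) matches what the paper's own proof actually uses, and your explicit appeal to Proposition~\ref{division} to produce the low-degree representative is a minor reorganization of the same argument, not a different method.
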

\begin{proof}
The vector $\begin{pmatrix}0\\x^n\, g\\ -g\end{pmatrix}\in\ML(\Tl(x), x^{n}, x^{2\,n}-1;g)$  (a particular solution). We divide it by $\begin{pmatrix}u_1&u_2\\v_1&v_2\\w_1&w_2\end{pmatrix}$ we obtain
$$\begin{pmatrix}u\\v\\w\end{pmatrix}=\begin{pmatrix}0\\x^n\,g\\g\end{pmatrix}-\begin{pmatrix}u_1&u_2\\v_1&v_2\\w_1&w_2\end{pmatrix}\begin{pmatrix}p\\q\end{pmatrix} $$
$(u,v,w)$ is the remainder of division, thus  $(u,v,w)\in\kk[x]^3_{n-1}\cap\ML(\Tl(x), x^{n}, x^{2\,n}-1;g)$.  However  $(u,v,w)$ is the unique vector $\in\kk[x]^3_{n-1}\cap\ML(\Tl(x), x^{n}, x^{2\,n}-1;g)$ because if there is an other vector then their difference is in $\ML(\Tl(x),x^{n},x^{2\,n}-1) \cap \kk[x]_{n-1}^{3}$ which is equal to $\{(0,0,0)\}$.
\end{proof}

\begin{problem}\label{pb:division}
Given a matrix and a vector of polynomials
$\begin{pmatrix}e(x)&e'(x)\\f(x)&f'(x)\end{pmatrix}$ of degree $n$, and
$\begin{pmatrix}p(x)\\q(x)\end{pmatrix}$ of degree $m\geq n$, such that
$\begin{pmatrix}e_n&e_n'\\f_n&f_n' \end{pmatrix}$ is invertible; find the remainder
of the division of $\begin{pmatrix}p(x)\\q(x)\end{pmatrix} $ by
$\begin{pmatrix}e(x)&e'(x)\\f(x)&f'(x)\end{pmatrix}$.
\end{problem}
\begin{proposition}
The first coordinate of remainder vector of the division of
$\begin{pmatrix}0\\x^ng\end{pmatrix}$ by $\begin{pmatrix}
u&u'\\r&r'\end{pmatrix}$ is the polynomial $v(x)$ solution of
\eqref{pb:toep}.\end{proposition}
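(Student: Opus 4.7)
The plan is to recognize the statement as a compression of Proposition \ref{divi}: the $3\times 2$ division used there can be replaced by a $2\times 2$ division of the type described in Problem \ref{pb:division}, once one observes that the third coordinate of the three-row remainder is automatically of degree $<n$ as soon as the first two are. Thus the matrix $\begin{pmatrix} u & u' \\ r & r'\end{pmatrix}$ of the statement is to be read as the top two rows of the $n$-basis matrix $\begin{pmatrix} u_{1} & u_{2}\\ v_{1} & v_{2}\\ w_{1} & w_{2}\end{pmatrix}$ of $\ML(\tilde{T},x^{n},x^{2n}-1)$.

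First I would verify that the hypothesis of Problem \ref{pb:division} is met for this $2\times 2$ matrix. From the explicit constructions \eqref{base1} and \eqref{base2}, the coefficients of $x^{n}$ in $u_{1},u_{2},v_{1},v_{2}$ are respectively $1,0,0,1$, so the leading coefficient matrix at degree $n$ is $\II_{2}$, which is invertible. Hence the Euclidean division of $\begin{pmatrix} 0\\ x^{n}g\end{pmatrix}$ by $\begin{pmatrix} u_{1} & u_{2}\\ v_{1} & v_{2}\end{pmatrix}$ is well defined; call the quotient $(p,q)^{T}$ and the remainder $(\tilde u,\tilde v)^{T}$, each entry of degree $<n$.

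Next I would lift this back to a triple. Set $\tilde w := -g-p\,w_{1}-q\,w_{2}$, so that
\[
(\tilde u,\tilde v,\tilde w)= (0,x^{n}g,-g)-p(u_{1},v_{1},w_{1})-q(u_{2},v_{2},w_{2}).
\]
Since $(0,x^{n}g,-g)\in\ML(\tilde{T},x^{n},x^{2n}-1;g)$ (verified by direct substitution, as in the proof of Proposition \ref{divi}) and the two basis triples are syzygies, the triple $(\tilde u,\tilde v,\tilde w)$ belongs to $\ML(\tilde{T},x^{n},x^{2n}-1;g)$. From the defining relation $\tilde{T}\tilde u+x^{n}\tilde v+(x^{2n}-1)\tilde w=g$, the three known terms $\tilde{T}\tilde u$, $x^{n}\tilde v$ and $g$ each have degree $\le 3n-2$, which forces $\deg \tilde w\le n-2$. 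Hence $(\tilde u,\tilde v,\tilde w)\in\kk[x]_{n-1}^{3}\cap\ML(\tilde{T},x^{n},x^{2n}-1;g)$.

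Finally I would invoke the uniqueness part of Proposition \ref{division}: there is exactly one such triple, and its first coordinate is precisely the Toeplitz solution (renamed $v(x)$ in the statement). Hence $\tilde u$, the first coordinate of the remainder produced by the $2\times 2$ Euclidean division, coincides with this solution. I do not expect a serious obstacle; the only delicate point is the degree inequality for $\tilde w$, which guarantees that trimming the bottom row of the three-row division does not destroy the reduction to $\kk[x]_{n-1}^{3}$.
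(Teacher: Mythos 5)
The paper gives no proof of this proposition; your argument is the natural deduction from Propositions \ref{division} and \ref{divi}, reducing the $2\times 2$ division to the $3\times 2$ one by lifting the remainder back to a triple and invoking uniqueness of the element of $\kk[x]_{n-1}^{3}\cap\ML(\tilde T,x^{n},x^{2n}-1;g)$. The key details are all handled correctly: the leading-coefficient matrix being $\II_{2}$ (justifying the division), the particular solution $(0,x^{n}g,-g)$ with the sign $-g$ (which the statement of Proposition \ref{divi} has wrong but its proof has right), and the degree bound $\deg\tilde w\le n-2$ obtained from $\deg\bigl((x^{2n}-1)\tilde w\bigr)\le 3n-2$.
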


We describe here a generalized Euclidean division algorithm to solve
problem \eqref{pb:division}.

Let $E(x)=\begin{pmatrix}p(x)\\q(x)\end{pmatrix}$ of degree $m$, $B(x)=\begin{pmatrix}e(x)&e'(x)\\f(x)&f'(x)\end{pmatrix}$ of degree $n\leq m$.
$E(x)=B(x)Q(x)+R(x)$ with $\deg(R(x))<n,$ and $ \deg(Q(x))\leq m-n$. Let $z=\frac{1}{x}$
 \begin{eqnarray}\label{div}
&E(x)&=B(x)Q(x)+R(x)\nonumber\\
\Leftrightarrow& E(\displaystyle \frac{1}{z})&=B(\frac{1}{z})Q(\frac{1}{z})+R(\frac{1}{z})\nonumber\\
\Leftrightarrow& z^{m}E(\displaystyle \frac{1}{z})&=z^nB(\frac{1}{z})z^{m-n}Q(\frac{1}{z})+z^{m-n+1}z^{n-1}R(\frac{1}{z})\nonumber\\
\Leftrightarrow& \hat{E}(z)&= \hat{B}(z) \hat{Q}(z)+z^{m-n+1} \hat{R}(z)
\end{eqnarray}
with $ \hat{E}(z), \hat{B}(z), \hat{Q}(z), \hat{R}(z)$ are the polynomials obtained by reversing the order of coefficients of polynomials $E(z),B(z),Q(z),R(z)$.
\begin{eqnarray*}
(\ref{div})&\Rightarrow& \frac{ \hat{E}(z)}{ \hat{B}(z)}= \hat{Q}(z)+z^{m+n-1}\frac{  \hat{R}(z)}{ \hat{B}(z)}\\ 
&\Rightarrow& \hat{Q}(z)=\frac{ \hat{E}(z)}{ \hat{B}(z)} \mod z^{m-n+1}
\end{eqnarray*}
$\displaystyle\frac{1}{ \hat{B}(z)}$ exists  because its coefficient of highest degree is invertible. Thus $\hat{Q}(z)$ is obtained by computing the first $m-n+1$ coefficients of  $\displaystyle\frac{ \hat{E}(z)}{ \hat{B}(z)}$. 

To find $W(x)=\displaystyle\frac{1}{\hat{B}(x)}$ we will use Newton's iteration: Let $f(W)=\hat{B}-W^{-1}$.
\\$f'(W_l).(W_{l+1}-W_l)=-W_l^{-1}(W_l+1-W_l)W_l^{-1}=f(W_l)=\hat{B}-W_l^{-1},$ thus $$W_{l+1}=2W_l-W_l\hat{B}W_l.$$
and $W_0=\hat{B}_0^{-1}$ which exists.
\begin{eqnarray*}
W-W_{l+1}&=&W-2W_l+W_l\hat{B}W_l\\
&=&W(\mathbb{I}_2-\hat{B}W_l)^2\\
&=&(W-W_l)\hat{B}(W-W_l)
\end{eqnarray*}
Thus $W_l(x)=W(x) \mod x^{2l}$ for $l=0,\dots,\lceil\log(m-n+1)
\rceil$. 
\begin{proposition}
We need $\mathcal{O}(n\log(n)\log(m-n)+m\log m)$ arithmetic operations to solve problem \eqref{pb:division}
\end{proposition}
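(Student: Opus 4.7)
The strategy is to bound the arithmetic cost of the three successive steps of the algorithm sketched above: (a) computing $W(z)=\hat{B}(z)^{-1}\bmod z^{m-n+1}$ by Newton iteration, (b) computing the truncated product $\hat{Q}(z)=\hat{E}(z)\,W(z)\bmod z^{m-n+1}$, and (c) reversing the coefficients to obtain $Q(x)$ and then forming the remainder $R(x)=E(x)-B(x)\,Q(x)$. I would establish the stated bound by charging each phase separately and summing.

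For (a), the identity $W-W_{l+1}=(W-W_{l})\,\hat{B}\,(W-W_{l})$ displayed in the excerpt yields quadratic convergence, so that $W_{l}\equiv W\pmod{z^{2^{l}}}$ and $\lceil\log_{2}(m-n+1)\rceil$ iterations suffice starting from $W_{0}=\hat{B}_{0}^{-1}$, which exists by hypothesis on the leading coefficient matrix. Each Newton step consists of a constant number of $2\times 2$ matrix-polynomial products, and since $\hat{B}$ has degree $n$, I would argue that truncating $\hat{B}$ and $W_{l}$ appropriately and using FFT-based multiplication of polynomials of length $O(n)$ makes each iteration cost $O(n\log n)$. Multiplying by the $O(\log(m-n))$ number of Newton steps gives the contribution $O(n\log n\log(m-n))$.

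For (b), I would compute $\hat{E}\,W\bmod z^{m-n+1}$ as a $2\times 2$ matrix-vector polynomial product whose factors have degrees at most $m$ and $m-n$; a constant number of FFT multiplications of length $O(m)$ gives the bound $O(m\log m)$. For (c), reversal of coefficients is a mere relabeling; the matrix-vector product $BQ$ involves factors of degrees $n$ and $m-n$ and produces a polynomial of degree at most $m$, so it also costs $O(m\log m)$ via FFT, and the subtraction $E-BQ$ is $O(m)$. Summing the three contributions yields the claim.

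The delicate point I expect to fight is the per-iteration $O(n\log n)$ bound in phase (a). For early iterations with $2^{l}\le n$ this is immediate, since all polynomials involved have degree $O(n)$. For later iterations with $2^{l}>n$, one should split $W_{l}$ into windows of length $n$ before multiplying by $\hat{B}$, discarding the coefficients beyond $z^{2^{l+1}}$, so that each Newton step still amounts to a constant number of FFT products of length $O(n)$. If this splitting is waived, a direct FFT at full precision $2^{l+1}$ gives instead the coarser bound $O((m-n)\log(m-n))$ for phase (a), which is absorbed by the $O(m\log m)$ term from phase (b), and the overall complexity claimed in the proposition still holds.
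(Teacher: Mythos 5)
Your strategy coincides with the paper's: Newton iteration to invert $\hat B$ modulo $z^{m-n+1}$ in $\lceil\log_2(m-n+1)\rceil$ steps, followed by one truncated FFT multiplication to get $\hat Q$. The paper also charges $\mathcal O(n\log n)$ per Newton step and $\mathcal O(m\log m)$ for the final product, and does not even mention your phase (c) — the reversal and the forming of $R=E-BQ$ — so your accounting is actually slightly more complete. The one place where your argument is off is the windowed version of phase (a): splitting $W_l$ (of length $2^l>n$) into blocks of length $n$ before multiplying by $\hat B$ gives $\Theta(2^l/n)$ FFT products of size $O(n)$, not a constant number, so the per-step cost in this regime is $O(2^l\log n)$, not $O(n\log n)$. (The paper has the same imprecision, unqualified.) You correctly flag this as the delicate point, and your fallback saves the proposition: summing the geometric series $\sum_l 2^l\log 2^l$ gives $O((m-n)\log(m-n))$ for all of phase (a), which is dominated by the $O(m\log m)$ term. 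So the proposition's bound holds, but not via the per-iteration $O(n\log n)$ claim; if you state it, you should either restrict to $m-n=O(n)$ (which is the regime that actually arises in the application to \eqref{pb:toep}, where $m\le 2n-1$) or replace the windowing sentence with the geometric-sum argument.
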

\begin{proof}
We must do $\lceil\log(m-n+1) \rceil$ Newton's iteration
to obtain the first $m-n+1$ coeficients of
$\displaystyle\frac{1}{\hat{B}}=W(x)$. And for each iteration we must do
$\mathcal{O}(n\log n)$ arithmetic operations (multiplication of polynimials of degree $n$). And then we need $\mathcal{O}(m\log m)$ aritmetic operations to do the multiplication $\hat{E}.\displaystyle\frac{1}{\hat{B}}$.
\end{proof}
\subsection{Construction of the generators}
The canonical basis of $\kk[x]^3$ is denoted by $\sigma_1,\sigma_2,\sigma_3$. Let $\rho_1,\,\rho_2$ the generators of $\ML(\Tl(x),x^n,x^{2n}-1)$ of degree $n$ given by 
\begin{equation}\label{base3}
\begin{array}{l}\rho_1=x^n\sigma_1-(u,v,w)=(u_1,v_1,w_1)\\ \rho_2=x^n\sigma_2-(u',v',w')=(u_2,v2_,w_2)\end{array}
\end{equation}
with $(u,v,w),\,(u',v',w')$ are the vector given in  \eqref{base1} and \eqref{base2}.

We will describe here how we compute $(u_1,v_1,w_1)$ and $(u_2,v_2,w_2)$. We will give two methods to compute them, the second one is the method given in \cite{MR1871324}. The first one use the Euclidean gcd algorithm: 

We will recal firstly the algebraic and computational properties of the well known extended Euclidean algorithm (see \cite{MR2001757}):
Given $p(x), p'(x)$ two polynomials in degree $m$ and $m'$ respectively, let
$$\begin{array}{ll}
r_0=p,\qquad&r_1=p',\qquad\\s_0=1,&s_1=0,\\t_0=0,&t_1=1.
\end{array}$$
and define
\begin{eqnarray*}
r_{i+1}&=&r_{i-1}-q_ir_i,\\
s_{i+1}&=&s_{i-1}-q_is_i,\\
t_{i+1}&=&t_{i-1}-q_it_i,
\end{eqnarray*}
where $q_i$ results when the division algorithm is applied to $r_{i-1}$ and $r_i$, i.e. $r_{i-1}=q_ir_i+r_{i+1}$ . $\deg r_{i+1}<\deg r_{i}$ for $i=1,\ldots,l$ with $l$ is such that $r_l=0$, therefore $r_{l-1}=\gcd(p(x),p'(x))$.
\begin{proposition}\label{eea}
The following relations hold:
$$s_ip+t_ip'=r_i\quad \textrm{ and }\quad(s_i,t_i)=1\quad\textrm{ for }i=1,\ldots,l$$
and
$$\left\{\begin{array}{l}\vspace{2mm}
\deg r_{i+1}<\deg r_i, \quad i=1,\ldots,l-1\\ \vspace{2mm}
\deg s_{i+1}>\deg s_i\quad\textrm{ and }\quad \deg t_{i+1}>\deg t_i,\\\vspace{2mm}
\deg s_{i+1}=\deg(q_i.s_i)=\deg v-\deg r_i,\\\vspace{2mm}
\deg t_{i+1}=\deg(q_i.t_i)=\deg u-\deg r_i.
\end{array}\right.$$
\end{proposition}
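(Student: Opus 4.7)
The plan is a straightforward triple induction on the index $i$, following the recursive definitions.

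\textbf{Step 1 (Bézout identity).} I would prove $s_i\,p + t_i\,p' = r_i$ by induction on $i$. The base cases $i=0,1$ are immediate from the initial values $(s_0,t_0) = (1,0)$, $(s_1,t_1) = (0,1)$. For the inductive step, a direct substitution gives
\begin{eqnarray*}
s_{i+1} p + t_{i+1} p' &=& (s_{i-1} - q_i s_i) p + (t_{i-1} - q_i t_i) p' \\
&=& (s_{i-1} p + t_{i-1} p') - q_i (s_i p + t_i p') \\
&=& r_{i-1} - q_i r_i \;=\; r_{i+1}.
\end{eqnarray*}

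\textbf{Step 2 (Coprimality).} Rather than attack $\gcd(s_i,t_i) = 1$ directly, I would prove the sharper determinant identity
\[
s_i\,t_{i+1} - s_{i+1}\,t_i = (-1)^{i}
\]
by induction. The base case is $s_0 t_1 - s_1 t_0 = 1$, and the inductive step expands
\[
s_i t_{i+1} - s_{i+1} t_i = s_i(t_{i-1} - q_i t_i) - (s_{i-1} - q_i s_i) t_i = -(s_{i-1} t_i - s_i t_{i-1}),
\]
so the quantity flips sign at each step and stays equal to $\pm 1$. Coprimality of $s_i$ and $t_i$ follows because any common divisor divides $\pm 1$.

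\textbf{Step 3 (Degree relations).} The inequality $\deg r_{i+1} < \deg r_i$ is the defining property of Euclidean division at step $i$. The hard part, and the real content of the proposition, is to show that no cancellation occurs in the two-term recursions $s_{i+1} = s_{i-1} - q_i s_i$ and $t_{i+1} = t_{i-1} - q_i t_i$, so that one can equate $\deg s_{i+1} = \deg q_i + \deg s_i$. My plan is to establish, jointly by induction on $i$, the closed-form formulas
\[
\deg s_i = \deg p' - \deg r_{i-1}, \qquad \deg t_i = \deg p - \deg r_{i-1} \qquad (i \geq 1).
\]
These are compatible with Step 1: reading leading-coefficient degrees off $t_i p' = r_i - s_i p$ forces $\deg t_i = \deg p - \deg r_{i-1}$ once one knows that the $s_i p$ term dominates $r_i$ (which is the case for $i\ge 2$ since then $\deg r_i < \deg r_{i-1}$). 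Given these formulas at levels $i-1$ and $i$, and using $\deg q_i = \deg r_{i-1} - \deg r_i$, we find
\[
\deg(q_i s_i) = (\deg r_{i-1} - \deg r_i) + (\deg p' - \deg r_{i-1}) = \deg p' - \deg r_i,
\]
which strictly exceeds $\deg s_{i-1} = \deg p' - \deg r_{i-2}$ because $\deg r_i < \deg r_{i-2}$. So no cancellation occurs in $s_{i+1} = s_{i-1} - q_i s_i$, and $\deg s_{i+1} = \deg(q_i s_i)$, completing the induction; the argument for $t_i$ is identical with $p$ replacing $p'$. The strict degree monotonicity $\deg s_{i+1} > \deg s_i$ and $\deg t_{i+1} > \deg t_i$ then drops out of the closed-form formulas together with $\deg r_i < \deg r_{i-1}$.

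\textbf{Main obstacle.} The only delicate point is the non-cancellation in Step 3; everything else is a routine unwinding of the defining recursion. The trick is not to try to prove non-cancellation in isolation, but to carry the exact degree formulas through the induction simultaneously, so that at each step the degree of $q_i s_i$ is already known to exceed the degree of $s_{i-1}$.
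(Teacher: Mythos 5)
The paper does not actually prove this proposition: it is stated as a recollection of standard facts about the extended Euclidean algorithm, with the reader referred to the cited textbook [MR2001757]. You have therefore supplied a proof where the paper gives none, and your argument is the standard one and essentially correct. The B\'ezout identity by induction, and the determinant identity $s_i t_{i+1} - s_{i+1} t_i = (-1)^i$ to get coprimality, are exactly the usual route. The real content is indeed the degree control, and your device of carrying the closed forms $\deg s_i = \deg p' - \deg r_{i-1}$, $\deg t_i = \deg p - \deg r_{i-1}$ through a joint induction -- so that $\deg(q_i s_i) = \deg p' - \deg r_i > \deg p' - \deg r_{i-2} = \deg s_{i-1}$ rules out leading-term cancellation in $s_{i+1} = s_{i-1} - q_i s_i$ -- is the right way to do it. (You are also correct to read the paper's ``$\deg v$'' and ``$\deg u$'' as typos for $\deg p'$ and $\deg p$.)

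One small inaccuracy: your closed form $\deg s_i = \deg p' - \deg r_{i-1}$ cannot hold at $i=1$, since $s_1 = 0$ has $\deg s_1 = -\infty$ while $\deg p' - \deg r_0 = \deg p' - \deg p$ is finite; similarly the proposition's own identity $\deg s_2 = \deg(q_1 s_1)$ is vacuous because $q_1 s_1 = 0$ there. The $s$-induction should therefore be started at $i = 2$ (with $s_2 = 1$, $s_3 = -q_2 s_2$, the step $s_3 = s_1 - q_2 s_2$ being trivially non-cancelling because $s_1 = 0$); the $t$-induction starts cleanly at $i = 1$ since $t_1 = 1$. With this adjustment to the base cases the argument is complete.
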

\begin{proposition}
By applying the Euclidean gcd algorithm in $p(x)=x^{n-1}T$ and $p'(x)=x^{2n-1}$ in degree $n-1$ and $n-2$ we obtain $\rho_1$ and $\rho_2$ respectively
\end{proposition}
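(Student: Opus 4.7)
My plan is to realize the two generators $\rho_1,\rho_2$ of $\ML(\Tl(x), x^{n}, x^{2n}-1)$ as the result of two specific iterations of the extended Euclidean algorithm (EEA) of Proposition~\ref{eea}, applied to a pair involving $\Tl(x)$ and $x^{2n}-1$, followed by a small correction to reach an honest syzygy. Throughout, I will use that $T$ is invertible, which forces $\gcd(\Tl(x), x^{2n}-1)$ to be a nonzero constant, so that the remainder sequence of the EEA strides through every degree from $2n$ down to $0$.

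First, I run the EEA on $p(x)$ and $p'(x)$ (up to a suitable monomial factor as in the statement), producing, by Proposition~\ref{eea}, identities
$$ s_i(x)\,\Tl(x) + t_i(x)\,(x^{2n}-1) = r_i(x), $$
with $\deg r_i$ strictly decreasing and $\deg s_{i+1}, \deg t_{i+1}$ controlled by $\deg r_i$. I then locate the two consecutive iterations at which $\deg r_i$ first equals $n-1$ and then $n-2$: call them step $i_1$ and step $i_2$. At these two steps, the degree formulas in Proposition~\ref{eea} pin $\deg s_{i_1+1}, \deg t_{i_1+1}, \deg s_{i_2+1}, \deg t_{i_2+1}$ near $n$, which is the right order of magnitude for a degree-$n$ generator of $\ML$.

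Next, I convert each Bezout-type identity into an element of the syzygy module. Starting from the trivial moving line with value $1$,
$$ \Tl(x)\cdot 0 + x^{n}\cdot x^{n} + (x^{2n}-1)\cdot(-1) = 1, $$
multiplying by $-r_i(x)$ and adding to the EEA identity gives
$$ \bigl(s_i,\; -r_i\, x^{n},\; t_i + r_i\bigr)\;\in\;\ML(\Tl(x), x^{n}, x^{2n}-1). $$
Because $\deg r_i < n$ at the steps we chose, the middle component has degree $<2n$, which can then be reduced modulo the Koszul syzygy $(0, x^{2n}-1, -x^{n}) \in \ML$ to bring every component into degree $\le n$. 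Performing this reduction at steps $i_1$ and $i_2$ yields two syzygies $\tilde\rho_1, \tilde\rho_2$ of total degree $n$.

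Finally, I verify that $\tilde\rho_1, \tilde\rho_2$ form a basis by checking that their leading coefficient vectors at $x^{n}$ are linearly independent (they must span $(1,0,0)$ and $(0,1,0)$ after normalization, mirroring the linear-independence argument used at the end of the proof of Proposition~\ref{prop:ML}); together with the degree equality $\mu_1+\mu_2=2n$, this forces $\tilde\rho_j$ to coincide with $\rho_j$ from \eqref{base3} up to an invertible scalar and a unimodular change of basis. The main obstacle is the degree bookkeeping in the reduction step: one must verify that the top-$x^{n}$ coefficients produced by the EEA combined with the Koszul correction truly exhaust the two linearly independent directions, rather than producing a dependent pair. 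This forces one to be careful in picking the precise iterations ($r_i$ of degree $n-1$ vs.\ $n-2$), as the two resulting syzygies must carry \emph{different} leading structures in the $(x^{n}\sigma_1, x^{n}\sigma_2)$-plane. Once this is established, matching with \eqref{base1}--\eqref{base2} is immediate.
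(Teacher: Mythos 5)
Your proposal diverges from the paper's argument in two places, and the second one is a genuine gap that invalidates the construction.

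First, you run the extended Euclidean algorithm on $\Tl(x)$ and $x^{2n}-1$. The statement (and the paper's proof) instead runs it on $\bar{T}(x)=x^{n-1}T(x)$, a polynomial of degree $\le 2n-2$, and on $x^{2n-1}$. The remark ``up to a suitable monomial factor'' does not bridge the two: $\Tl(x)$ and $x^{n-1}T(x)$ are \emph{not} monomial multiples of each other (recall $\Tl = T^{+}+x^{2n}T^{-}$), and $x^{2n}-1$ and $x^{2n-1}$ have different degrees, so the remainder sequences, and in particular the degrees of the Bezout coefficients, are not the same. The paper's choice is dictated by the equivalence $T u=e_{1}\Leftrightarrow\bar{T}u_{1}+x^{2n-1}B_{1}=x^{n-1}+A_{1}$ with $\deg A_{1}\le n-2$, which is precisely what makes the remainder $x^{n-1}+A_{1}$ appear as an $r_{l}$ of degree $n-1$ in the EEA on $\bar{T},x^{2n-1}$, and dually for $u_{2}$ at degree $n-2$.

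Second, and more seriously, your reduction step fails. From $s_{i}\Tl + t_{i}(x^{2n}-1)=r_{i}$ and the trivial relation $x^{n}\cdot x^{n}-(x^{2n}-1)=1$, you obtain the syzygy $(s_{i},-r_{i}x^{n},t_{i}+r_{i})$. Its middle component $-r_{i}x^{n}$ has degree $\deg r_{i}+n$; choosing $\deg r_{i}=n-1$ gives degree $2n-1$. You claim this can be brought below $n$ by reducing modulo the Koszul syzygy $(0,x^{2n}-1,-x^{n})$. But reduction of the middle component modulo $x^{2n}-1$ does nothing to a polynomial of degree $2n-1<2n$, and reducing the third component modulo $x^{n}$ via this same Koszul relation would \emph{increase} the degree of the middle component by injecting multiples of $x^{2n}-1$. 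So the syzygy you produce has degree $\approx 2n-1$, not $n$, and your construction never reaches the generators of degree $n$. To express this degree-$(2n-1)$ syzygy as $\alpha\rho_{1}+\beta\rho_{2}$ and strip away $\alpha,\beta$ you would already need to know $\rho_{1},\rho_{2}$, which is circular.

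The paper's proof avoids both problems by working throughout with $\bar{T}$ and $x^{2n-1}$. It pins down the particular EEA steps via an explicit invertible $(2n-1)\times(2n-1)$ linear system and Proposition~\ref{eea}'s degree formulas, which give exactly $\deg s_{l}\le n-1$, $\deg t_{l}\le n-2$, etc. The last step is a nontrivial conversion from the Laurent-polynomial identity $T(x)u(x)-x^{-n+1}A_{1}(x)+x^{n}B_{1}(x)=1$ to the module $\ML(\Tl,x^{n},x^{2n}-1)$, using the identities $\Tl(x)=T(x)+(x^{2n}-1)T^{-}(x)$ and $x^{-n+1}A_{1}=x^{n}(xA_{1})-x^{-n}(x^{2n}-1)(xA_{1})$; this is where the $v$ and $w$ components are manufactured with the right degrees. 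Your proposal does not carry out this conversion, and the Koszul reduction you substitute for it does not do the job.
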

\begin{proof}
We saw that $Tu=g$ if and only if there exist $A(x)$ and $B(x)$ such that 
$$\bar{T}(x)u(x)+x^{2n-1}B(x)=x^{n-1}b(x)+A(x)$$ with $\bar{T}(x)=x^{n-1}T(x)$ a polynomial of degree $\leq2n-2$.
In \eqref{base1} and \eqref{base2} we saw that for $g(x)=1$ $(g=e_1)$ and $g(x)=x^nT(x)$ $(g=(0,t_{-n+1},\ldots,t_{-1})^T)$ we obtain a base of $\ML(\Tl(x),x^n,x^{2n}-1)$. $Tu_1=e_1$ if and only if there exist $A_1(x)$, $B_1(x)$ such that
\begin{equation}\label{eea1}\bar{T}(x)u_1(x)+x^{2n-1}B_1(x)=x^{n-1}+A_1(x)\end{equation} 
and $Tu_2=(0,t_{-n+1},\ldots,t_{-1})^T$ 
if and only if there exist $A_2(x)$, $B_2(x)$ such that
\begin{equation}\label{eea2}\bar{T}(x)(u_2(x)+x^{n})+x^{2n-1}B_2(x)=A_2(x)\end{equation} 
with $\deg A_1(x)\leq n-2$ and $\deg A_2(x)\leq n-2$. Thus By applying the extended Euclidean algorithm in $p(x)=x^{n-1}T$ and $p'(x)=x^{2n-1}$ until we have $\deg r_l(x)=n-1$ and $\deg r_{l+1}(x)=n-2$ we obtain 
$$u_1(x)=\frac{1}{c_1}s_l(x),\quad B_1(x)=\frac{1}{c_1}t_l(x),\quad x^{n-1}+A_1(x)=\frac{1}{c_1}r_l(x)$$
and 
$$x^n+u_2(x)=\frac{1}{c_2}s_{l+1}(x),\quad B_2(x)=\frac{1}{c_2}t_{l+1}(x),\quad A_2(x)=\frac{1}{c_2}r_{l+1}(x)$$
with $c_1$ and $c_2$ are the highest coefficients of $r_l(x)$ and $s_{l+1}(x)$ respectively, in fact: 
The equation \eqref{eea1} is equivalent to 
$$
\begin{array}{r}
\overbrace{\phantom{mmmmmmmm}}^{n}\qquad\overbrace{\phantom{mmmmmmm}}^{n-1}\qquad\\
\begin{array}{r}
\left.\begin{array}{l}
{}_{\displaystyle{n-1}}\\\phantom{r}
\end{array}\right\{\\\phantom{r}\\
\left.\begin{array}{l}
\phantom{r}\\n\\\phantom{r}
\end{array}\right\{\\\phantom{r}\\
\left.\begin{array}{l}
{}_{\displaystyle{n-1}}\\\phantom{r}
\end{array}\right\{
\end{array}
\left(
\begin{array}{ccc|ccc}
t_{-n+1}&&&&&\\
\vdots&\ddots&&&&\\
\hline
t_0&\dots&t_{-n+1}&&&\\
\vdots&\ddots&\vdots&&&\\
t_{n-1}&\dots&t_0&&&\\
\hline
&\ddots&\vdots&\;\;1\;\;&&\\
&&&&\;\ddots\;&\\
&&t_{n-1}&&&\;\;1\;\;
\end{array}
\right)
\end{array}
\begin{pmatrix}
\phantom{r}\\u_1\\\phantom{r}\\B_1\\\phantom{r}
\end{pmatrix}
=\begin{pmatrix}\phantom{r}\\A_1\\\phantom{r}\\\hline 1\\0\\\vdots\\0\end{pmatrix}
$$
since $T$ is invertible then the $(2n-1)\times(2n-1)$ block at the bottom is invertible and then $u_1$ and $B_1$ are unique, therefore $u_1,\,B_1$ and $A_1$ are unique. And, by proposition \eqref{eea}, $\deg r_l=n-1$ ($r_l=c_1(x^n+A_1(x)$) then  $\deg s_{l+1}=(2n-1)-(n-1)=n$ and $\deg t_{l+1}=(2n-2)-(n-1)=n-1$ thus, by the same proposition, $\deg s_l\leq n-1$ and $\deg t_l\leq n-2$. Therfore $\frac{1}{c_1} s_l=u_1$ and $\frac{1}{c_1} t_l=B_1$.

Finaly, $Tu=e_1$ if and only if there exist $v(x)$, $w(x)$ such that  
\begin{equation}\Tl(x)u(x)+x^nv(x)+(x^{2n}-1)w(x)=1\end{equation}
$\Tl(x)=T^++x^{2n}T^-=T+(x^{2n}-1)T^-$thus 
\begin{equation}\label{syz}
T(x)u(x)+x^nv(x)+(x^{2n}-1)(w(x)+T^-(x)u(x))=1
\end{equation}
of a other hand $T(x)u(x)-x^{-n+1}A_1(x)+x^nB_1(x)=1$ and $x^{-n+1}A_1(x)=x^n(xA_1)-x^{-n}(x^{2n}-1)xA_1$ thus \begin{equation}\label{syz2}T(x)u(x)+x^{n}(B(x)-xA(x))+(x^{2n}-1)x^{-n+1}A(x)=1\end{equation}
By comparing \eqref{syz} and \eqref{syz2}, and as $1=x^nx^n-(x^{2n}-1)$ we have the proposition and we have $w(x)=x^{-n+1}A(x)-T_-(x)u(x)+1$ which is the part of positif degree of $-T_-(x)u(x)+1$.
\end{proof}

\begin{remark}
A superfast euclidean gcd algorithm, wich uses no more then $\Oc(nlog^2 n)$, is given in \cite{MR2001757} chapter 11.
\end{remark}

The second methode to compute $(u_1,v_1,w_1)$ and $(u_2,v_2,w_2)$ is given in \cite{MR1871324}. We are interested in computing the coefficients of $\sigma_1,\,\sigma_2$, the coefficients of $\sigma_3$ correspond to elements in the ideal $(x^{2n}-1)$ and thus can be obtain by reduction of $(\Tl(x)\,x^n).B(x)$ by $x^{2n}-1$, with $B(x)=\begin{pmatrix}x^n-u_0&-v_0\\-u_1&x^n-v_1\end{pmatrix}=\begin{pmatrix}u(x)&v(x)\\u'(x)&v'(x)\end{pmatrix}$.

A superfast algorithm to compute $B(x)$ is given in \cite{MR1871324}. Let us describe how to compute it.

By evaluation of \eqref{base3} at the roots $\omega_j\in \Unit{2n}$ we deduce that $(u(x)\,v(x))^T$ and $(u'(x)\,v'(x))^T$ are the solution of the following rational interpolation problem:
$$\left\{\begin{array}{l}\Tl(\omega_j)u(\omega_j)+\omega_j^nv(\omega_j)=0\\
\Tl(\omega_j)u'(\omega_j)+\omega_j^nv'(\omega_j)=0\end{array}\right. \textrm{ with } $$
$$\left\{\begin{array}{l}u_n=1,\,v_n=0\\u'_n=0,\,v'_n=1\end{array}\right.$$

\begin{definition}
The $\tau$-degree of a vector polynomial $w(x)=(w_1(x)\,w_2(x))^T$ is defined as 
$$\tau-\deg w(x):=\max\{\deg w_1(x),\,\deg w_2(x)-\tau\}$$
\end{definition}

$B(x)$ is a $n-$reduced basis of the module of all vector polynomials $r(x)\in\kk[x]^2$ that satisfy the interpolation conditions
$$f_j^Tr(\omega_j)=0,\;\;j=0,\ldots,2n-1$$ with $f_j=\begin{pmatrix}\Tl(\omega_j)\\\omega^n_j\end{pmatrix}$.

$B(x)$ is called a  $\tau-$reduced basis (with $\tau=n$) that corresponds to the interpolation data $(\omega_j,f_j),\,j=0,\ldots,2n-1$.
\begin{definition}
A set of vector polynomial in $\kk[x]^{2}$ is called $\tau$-reduced if the $\tau$-highest degree coefficients are lineary independent. 
\end{definition}
\begin{theorem}
Let $\tau=n$. Suppose $J$ is a positive integer. Let $\sigma_1,\ldots,\sigma_J\in\kk$ and $\phi_1,\ldots,\phi_J\in\kk^2$ wich are $\neq(0\,0)^T$. Let $1\leq j\leq J$ and $\tau_J\in\mathbb{Z}$. Suppose that $B_j(x)\in\kk[x]^{2\times2}$ is a $\tau_J$-reduced basis matrix with basis vectors having $\tau_J-$degree $\delta_1$ and $\delta_2$, respectively, corresponding to the interpolation data $\{(\sigma_i,\phi_i); i=1,\ldots,j\}$.

Let $\tau_{j\rightarrow J}:=\delta_1-\delta_2$. Let $B_{j\rightarrow J}(x)$ be a $\tau_{j\rightarrow J}$-reduced basis matrix corresponding to the interpolation data $\{(\sigma_i,B_j^T(\sigma_j)\phi_i); i=j+1,\ldots,J\}$.

Then $B_J(x):=B_j(x)B_{j\rightarrow J}(x)$ is a $\tau_J$-reduced basis matrix corresponding to the interpolation data  $\{(\sigma_i,\phi_i); i=1,\ldots,J\}$.
\end{theorem}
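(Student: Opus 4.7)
The plan is to verify three properties of the candidate matrix $B_J(x):=B_j(x)\,B_{j\to J}(x)$: first, that its columns satisfy the full list of interpolation conditions $\{(\sigma_i,\phi_i);\,i=1,\ldots,J\}$; second, that any $r(x)\in\kk[x]^2$ satisfying these conditions lies in the column $\kk[x]$-span of $B_J(x)$, so that $B_J$ is a basis matrix of the interpolation module; and third, that $B_J$ is $\tau_J$-reduced with column $\tau_J$-degrees adding up as expected. The first two parts are direct; the third is where the choice of the shift $\tau_{j\to J}=\delta_1-\delta_2$ is essential.

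For the interpolation property, I would split into two index ranges. For $1\le i\le j$, the hypothesis on $B_j$ gives $\phi_i^T B_j(\sigma_i)=0$, hence $\phi_i^T B_J(\sigma_i)=\phi_i^T B_j(\sigma_i)\,B_{j\to J}(\sigma_i)=0$. For $j+1\le i\le J$, I would rewrite $\phi_i^T B_j(\sigma_i)\,B_{j\to J}(\sigma_i)=(B_j^T(\sigma_i)\phi_i)^T B_{j\to J}(\sigma_i)$, which vanishes by the defining property of $B_{j\to J}$ applied to the transported data $(\sigma_i,B_j^T(\sigma_i)\phi_i)$. (I read the expression $B_j^T(\sigma_j)\phi_i$ in the statement as the natural $B_j^T(\sigma_i)\phi_i$.) For the basis property, let $r$ satisfy all $J$ conditions. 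In particular it satisfies the first $j$, so $r=B_j\,t$ for some $t\in\kk[x]^2$. Substituting into the remaining conditions yields $(B_j^T(\sigma_i)\phi_i)^T t(\sigma_i)=0$ for $i>j$, hence $t=B_{j\to J}\,v$ for some $v\in\kk[x]^2$, giving $r=B_J\,v$.

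The core of the proof is the $\tau_J$-reducedness. Let $\ell_1,\ell_2\in\kk^2$ denote the leading $\tau_J$-coefficient vectors of the two columns of $B_j$, and $m_1,m_2\in\kk^2$ the leading $\tau_{j\to J}$-coefficient vectors of the two columns of $B_{j\to J}$; by hypothesis both pairs are $\kk$-linearly independent. Writing the $k$-th column of $B_{j\to J}$ as $(p_k(x),q_k(x))^T$ with $\deg p_k\le\epsilon_k$ and $\deg q_k\le\epsilon_k+(\delta_1-\delta_2)$, I would expand the $k$-th column of $B_J$ as $p_k\cdot B_j^{(1)}+q_k\cdot B_j^{(2)}$. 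Tracking the degrees row by row shows that the $\tau_J$-degree of this column is at most $\delta_1+\epsilon_k$, and that the associated leading $\tau_J$-coefficient vector equals $(p_k)_{\epsilon_k}\,\ell_1+(q_k)_{\epsilon_k+\delta_1-\delta_2}\,\ell_2$. Reading this off for $k=1,2$, the leading $\tau_J$-coefficient matrix of $B_J$ factors as $[\ell_1\mid\ell_2]\cdot[m_1\mid m_2]$, a product of two invertible $2\times 2$ matrices, hence itself invertible. This simultaneously pins down each column $\tau_J$-degree as exactly $\delta_1+\epsilon_k$ and establishes $\tau_J$-reducedness of $B_J$.

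The main obstacle is the degree bookkeeping in this last step: one must confirm that the two summands $p_k B_j^{(1)}$ and $q_k B_j^{(2)}$ peak at the same $\tau_J$-degree, so that their leading coefficients genuinely combine rather than one being absorbed by the other. This is precisely what the choice $\tau_{j\to J}=\delta_1-\delta_2$ engineers: it aligns the maximal contribution in row $1$ at total degree $\delta_1+\epsilon_k$ and in row $2$ at total degree $\delta_1+\tau_J+\epsilon_k$, making the factorization of the leading coefficient matrix clean and its nonsingularity automatic from the hypotheses on $B_j$ and $B_{j\to J}$.
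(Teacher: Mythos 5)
The paper itself does not prove this statement: its ``proof'' is a single sentence deferring to the cited reference \cite{MR1871324}. Your proposal is therefore not comparable to a paper argument --- it is a genuine reconstruction, and a correct one.

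Each of your three steps is sound. The two interpolation-range splits and the row-vector factorization $\phi_i^T B_j(\sigma_i)B_{j\to J}(\sigma_i) = (B_j^T(\sigma_i)\phi_i)^T B_{j\to J}(\sigma_i)$ handle the interpolation property; composing the two basis properties ($r=B_j t$, then $t=B_{j\to J}v$) handles spanning. The degree bookkeeping in the reducedness step is where the content lies, and you have it right: with $\tau_{j\to J}=\delta_1-\delta_2$, writing column $k$ of $B_{j\to J}$ as $(p_k,q_k)^T$ with $\deg p_k\le\epsilon_k$, $\deg q_k\le\epsilon_k+\delta_1-\delta_2$, both contributions $p_k B_j^{(1)}$ and $q_k B_j^{(2)}$ peak at row-$1$ degree $\delta_1+\epsilon_k$ and row-$2$ degree $\delta_1+\epsilon_k+\tau_J$, so the leading $\tau_J$-coefficient matrix of $B_J$ is exactly $[\ell_1\mid\ell_2][m_1\mid m_2]$, nonsingular as a product of two nonsingular $2\times2$ matrices, and this simultaneously pins down each column $\tau_J$-degree as $\delta_1+\epsilon_k$. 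You also rightly read the statement's $B_j^T(\sigma_j)\phi_i$ as the intended $B_j^T(\sigma_i)\phi_i$ --- that is indeed a typo in the paper. One small edge case worth a sentence in a fully written-out version: if $B_j^T(\sigma_i)\phi_i=(0\;0)^T$ for some $i>j$, the transported condition is vacuous; your basis argument still goes through verbatim (the module of $t$'s is just larger), but it is worth noting so the reader does not worry about it.
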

\begin{proof}
For the proof, see \cite{MR1871324}.
\end{proof}

When we apply this theorem for the $\omega_j\in\Unit{2n}$ as interpolation points, we obtain a superfast algorithm ($\mathcal{O}(n\log^2n)$) wich compute $B(x)$.\cite{MR1871324} 

We consider the two following problems: 

\section{Bivariate case}
Let $m\in\mathbb{N},m\in\mathbb{N}$. In this section 
we denote by  $E=\{(i,j);\;0\leq i\leq m-1,\,0\leq j\leq n-1\}$, and $R=\kk[x,y]$. We denote by $\kk[x,y]_{\substack{m\\n}}$ the vector space of bivariate polynomials of degree $\leq m$ in $x$ and $\leq n$ in $y$.
\begin{notation}
For a block matrix $M$, of block size $n$ and each block is of size $m$, we will use the following indication :
\begin{equation}
M=\left(M_{(i_1,i_2),(j_1,j_2)}\right)_{\substack{0\leq i_1,j_1\leq m-1\\0\leq i_2,j_2\leq n-1}}=(M_{\alpha\beta})_{\alpha,\beta\in E}.
\end{equation}
$(i_2,j_2)$ gives the block's positions, $(i_1,j_1)$ the position in the blocks.

\end{notation}

\begin{problem}
Given a Toeplitz block Toeplitz matrix $T=(t_{\alpha-\beta})_{\alpha\in E,\beta\in E}\in\kk^{mn\times mn}$ ($T=(T_{\alpha\beta})_{\alpha,\beta\in E}$ with $T_{\alpha\beta}=t_{\alpha-\beta}$) of size $mn$ and $g=(g_{\alpha})_{\alpha\in\mathtt{E}}\in\kk^{mn}$, find $u=(u_{\alpha})_{\alpha\in\mathtt{E}}$ such that 
\begin{equation}\label{pb:toblto}
T\, u=g
\end{equation}
\end{problem}

\begin{definition} 
We define the following polynomials:
\begin{itemize}
\item $T(x,y):=\displaystyle\sum_{(i,j)\in \mathtt{E}-\mathtt{E}}t_{i,j}x^iy^j$, 
\item $\tilde{T}(x,y):=\displaystyle\sum_{i,j=0}^{2n-1,2m-1}\tilde{t}_{i,j}x^iy^j$ with\\ 
$\tilde{t}_{i,j}:=\left\{\begin{array}{ll}t_{i,j}&\textrm{si }i<m, j<n\\ 
t_{i-2m,j}&\textrm{si }i\geq m, j<n\\
t_{i,j-2n}&\textrm{si }i<m, j\geq n\\
t_{i-2m,j-2n}&\textrm{si }i\geq m,i\geq n \end{array}\right.$,
\item $u(x,y):=\displaystyle\sum_{(i,j)\in
\mathtt{E}}u_{i,j}\, x^iy^j$, 
$g(x,y):=\displaystyle\sum_{(i,j)\in \mathtt{E}}g_{i,j}
x^{i} y^{j}$.
\end{itemize} 
\end{definition}

\subsection{Moving hyperplanes}
For any non-zero vector of polynomials $\ab=(a_{1},\ldots,a_{n})\in
\kk[x,y]^{n}$, we denote by $\mathcal{L}(\ab)$  
the set of vectors $(h_{1},\ldots,h_{n})\in\kk[x,y]^n$ 
such that
\begin{equation} \label{eq:mvplanes}
\sum_{i=1}^{n} a_{i} \, h_{i} = 0.
\end{equation}
It is a $\kk[x,y]$-module of $\kk[x,y]^n$. 

\begin{proposition}\label{division2}
The vector $u$ is solution of \eqref{pb:toblto}  if and only if  there exist
$h_{2}, \ldots, h_{9} \in \kk[x,y]_{\substack{m-1\\n-1}}$ such that 
$(u(x,y),h_{2}(x,y),\ldots,h_{9}(x,y))$ belongs to\\
$\ML(\Tl(x,y),x^{m}, x^{2\, m} -1, y^{n}, x^{m}\,y^{n},(x^{2\,m}-1)\, y^{n},
y^{2\,n}-1,
x^{m} (y^{2\,n}-1),
(x^{2\,m}-1)\, (y^{2\,n}-1)).
$
\end{proposition}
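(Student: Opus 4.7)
I would follow the template of Proposition~\ref{division}, using the bivariate analog $Tu=g\iff\Pi_E(T(x,y)u(x,y))=g(x,y)$ (obtained by the same monomial-matching argument as in the univariate case) and the defining property $\tilde T(\omega,\eta)=T(\omega,\eta)$ for $(\omega,\eta)\in\Unit{2m}\times\Unit{2n}$. Write $V_E:=\kk[x,y]_{\substack{m-1\\n-1}}$ for short. For the forward direction, assume $Tu=g$; then the Laurent polynomial $T(x,y)u(x,y)-g(x,y)$ is supported on the complement of $E$ inside the rectangle $[-m+1,2m-2]\times[-n+1,2n-2]$. Passing to the quotient $A:=\kk[x,y]/(x^{2m}-1,y^{2n}-1)$, where $T\equiv\tilde T$, $x^{-m+1}\equiv x^m\cdot x$, and $y^{-n+1}\equiv y^n\cdot y$, the five ``negative-exponent'' zones of the complement collapse into the three ``high-exponent'' zones $x^m V_E$, $y^n V_E$, $x^m y^n V_E$. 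Since $A=V_E\oplus x^m V_E\oplus y^n V_E\oplus x^m y^n V_E$ as a $\kk$-vector space, this produces a unique congruence
$$
\tilde T u - g \equiv x^m h_2 + y^n h_4 + x^m y^n h_5 \pmod{(x^{2m}-1,\, y^{2n}-1)}
$$
with $h_2,h_4,h_5\in V_E$.

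To lift this to an identity in $\kk[x,y]$, set $P:=\tilde T u - g - x^m h_2 - y^n h_4 - x^m y^n h_5\in(x^{2m}-1,\,y^{2n}-1)$, observing $\deg_x P\le 3m-2$ and $\deg_y P\le 3n-2$. A Euclidean division of $P$ by $x^{2m}-1$ in $x$ gives $P=(x^{2m}-1)\hat Q_1+r$ with $\deg_x\hat Q_1\le m-2$ and $\deg_x r<2m$; since $P\equiv 0$ in $A$, the remainder $r$ factors as $(y^{2n}-1)\hat Q_2$ with $\deg_y\hat Q_2\le n-2$ and $\deg_x\hat Q_2<2m$. A second Euclidean step divides $\hat Q_1$ modulo $y^{2n}-1$ in $y$, producing the coefficient $h_9$; splitting the remainder at $y^n$ yields $h_3$ and $h_6$, and splitting $\hat Q_2$ at $x^m$ yields $h_7$ and $h_8$. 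The degree bounds inherited from $\hat Q_1,\hat Q_2$ place all of $h_3,\ldots,h_9$ inside $V_E$.

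For the converse, given $\tilde T u+\sum_{i=2}^9 f_ih_i=g$ (where $f_2,\ldots,f_9$ denote the eight non-$\tilde T$ generators, in the order listed in the proposition) with all $h_i\in V_E$, rewrite the identity as $Tu-g=(T-\tilde T)u-\sum f_ih_i$ and evaluate at any $(\omega,\eta)\in\Unit{2m}\times\Unit{2n}$. Both $T-\tilde T$ and the five generators containing a factor of $x^{2m}-1$ or $y^{2n}-1$ vanish at such nodes, leaving $(Tu)(\omega,\eta)=g(\omega,\eta)-\omega^m h_2(\omega,\eta)-\eta^n h_4(\omega,\eta)-\omega^m\eta^n h_5(\omega,\eta)$. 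Expanding both sides in the discrete Fourier basis on $\Unit{2m}\times\Unit{2n}$ and using that $g$, $x^m h_2$, $y^n h_4$, $x^m y^n h_5$ have supports in the disjoint blocks $V_E$, $x^m V_E$, $y^n V_E$, $x^m y^n V_E$ of $A$, one reads off $(Tu)_{a,b}=g_{a,b}$ for $(a,b)\in E$, i.e., $Tu=g$.

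The main technical obstacle is the degree bookkeeping in the lifting step: the fact that $h_3,\ldots,h_9$ all fit into $V_E$ is precisely what dictates the tensor-product shape $\{1,x^m,x^{2m}-1\}\otimes\{1,y^n,y^{2n}-1\}\setminus\{1\otimes 1\}$ of the nine generators and is what makes the statement of the proposition come out so cleanly.
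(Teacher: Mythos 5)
Your proof is correct and follows essentially the same route as the paper: reduce $Tu-g$ modulo $(x^{2m}-1,y^{2n}-1)$ (which the paper phrases as evaluation at $\Unit{2m}\times\Unit{2n}$), match it against the $x^m$/$y^n$/$x^my^n$ blocks to obtain $h_2,h_4,h_5$, and then lift back to $\kk[x,y]$ by reduction to produce $h_3,h_6,\ldots,h_9$ inside $\kk[x,y]_{\substack{m-1\\n-1}}$. Your explicit direct-sum decomposition $A=V_E\oplus x^mV_E\oplus y^nV_E\oplus x^my^nV_E$ and the two-step Euclidean lift simply spell out details the paper states more briefly.
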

\begin{proof}
Let $L=\{x^{\alpha_{1}}y^{\alpha_{2}}, 0\le \alpha_{1} \le m-1, 0 \le
\alpha_{2} \le n-1\}$, and  $\Pi_{E}$ the projection of $R$ on the vector space
generated by $L$.   
By \cite{MR1762401}, we have
\begin{equation}\label{dv}
T\,u=g \Leftrightarrow \Pi_{E}(T(x,y)\,u(x,y))=g(x,y)
\end{equation}
which implies that
\begin{eqnarray} \label{eq:sol1}
T(x,y)u(x,y) & = & g(x,y)+x^{m}y^{n}A_1(x,y)+x^{m}y^{-n}A_2(x,y)+x^{-m}y^{n}A_3(x,y)+ x^{-m}y^{-n}A_4(x,y) \nonumber\\
& + &  x^{m}A_5(x,y)+x^{-m} A_6(x,y)+y^{n}A_7(x,y)+y^{-n}A_8(x,y),
\end{eqnarray} 
where the $A_i(x,y)$ are  polynomials of degree at most $m-1$ in $x$ and $n-1$ in $y$.
Since $\omega^{m} = \omega^{-m}$, $\upsilon^{n}=\upsilon^{-n}$,
$\Tl(\omega,\upsilon)=T(\omega,\upsilon)$ for $\omega\in \Unit{2\,m}$,
$\upsilon\in \Unit{2\,n}$, we deduce by evaluation at the roots $\omega\in
\Unit{2\,m}$, $\upsilon\in \Unit{2\,n}$ that
\begin{equation*}
R(x,y):= \Tl(x,y)u(x,y)+x^{m} h_{2}(x,y)+ y^{n}h_{4}(x,y)+ x^{m}y^{n} h_{5}(x,y) -  g(x,y) \in (x^{2\,m}-1,y^{2\,n}-1)
\end{equation*}
with $h_{2}=-(A_{5}+A_{6})$, $h_{4}=-(A_{7}+A_{8})$, 
$h_{5}=-(A_{1}(x,y)+A_{2}(x,y)+A_{3}(x,y)+A_{4}(x,y))$. 

By reduction by the polynomials $x^{2\,m}-1$, $y^{2\, n}-1$, and as $R(x,y)$
is of degree $\le 3m -1$ in $x$ and $\le 3n-1$ in $y$, 
there exist $h_{3}(x,y), h_{6}(x,y),\ldots,h_8(x,y) \in\kk[x,y]_{\substack{m-1\\n-1}}$ such that 
\begin{eqnarray}\label{reduc1}
&&\Tl(x,y)u(x,y)+x^m\, h_2(x,y)+(x^{2m}-1)h_{3}(x,y)+y^n h_{4}(x,y)+ x^my^nh_{5}(x,y)+\\\nonumber
&&(x^{2m}-1)y^nh_{6}(x,y)+(y^{2n}-1)h_{7}(x,y)+x^m(y^{2m}-1)h_{7}(x,y)+(x^{2n}-1)(y^{2n}-1)h_{8}(x,y)=g(x,y).
\end{eqnarray}
Conversely a solution of \eqref{reduc1} can be transformed into a solution of 
\eqref{eq:sol1}, which ends the proof of the proposition.
\end{proof}

In the following, we are going to denote by $\Tv$ the vector $\Tv=(\Tl(x,y),x^{m}, x^{2\, m} -1, y^{n}, x^{m}\,y^{n},(x^{2\,m}-1)\, y^{n},y^{2\,n}-1,x^{m} (y^{2\,n}-1),(x^{2\,m}-1)\, (y^{2\,n}-1))$.

\begin{proposition}\label{syzygy:2d:deg}
There is no elements of $\kk[x,y]_{\substack{m-1\\n-1}}$ in $\ML(\Tv)$. 
\end{proposition}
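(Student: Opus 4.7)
The plan is to mimic the univariate construction from Proposition \ref{prop:ML}. I would consider the map
\begin{equation*}
\kk[x,y]_{\substack{m-1\\n-1}}^{9}\;\longrightarrow\;\kk[x,y]_{\substack{3m-1\\3n-1}},\qquad (h_{1},\ldots,h_{9})\mapsto \sum_{i=1}^{9} h_{i}\,\Tv_{i},
\end{equation*}
and show it is injective. Since both spaces have dimension $9\,mn$, this is equivalent to showing that the corresponding $9mn\times 9mn$ matrix $S$ has full rank. First I would partition the target monomial basis $\{x^{i}y^{j}: 0\le i\le 3m-1,\,0\le j\le 3n-1\}$ into nine blocks indexed by $(A,B)\in\{0,1,2\}^{2}$, where $(A,B)$ collects the monomials with $Am\le i<(A+1)m$ and $Bn\le j<(B+1)n$. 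This gives $S$ a $9\times 9$ block-matrix form.

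Next I would identify the contribution of each column. Writing $\Tl=\sum_{A,B\in\{0,1\}} x^{Am}y^{Bn}\tilde{T}_{AB}(x,y)$ with $\tilde{T}_{AB}\in\kk[x,y]_{\substack{m-1\\n-1}}$, multiplication by $\Tl$ contributes a block $T_{AB}$ to every block row. The remaining eight entries of $\Tv$ are supported on far fewer blocks: $x^{m}$ hits only block $(1,0)$; $x^{2m}-1$ hits $(0,0)$ and $(2,0)$ with signs $-,+$; and so on up through $(x^{2m}-1)(y^{2n}-1)$ which hits the four corner blocks $(0,0),(2,0),(0,2),(2,2)$ with the expected signs. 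In particular, each of the eight equations attached to a block $(A,B)\ne(0,0)$ contains an identity block $\pm I_{mn}$ multiplying a unique $h_{i}$ ($i\ne 1$), allowing me to solve successively
\begin{equation*}
h_{5}=-T_{11}h_{1},\quad h_{6}=-T_{21}h_{1},\quad h_{8}=-T_{12}h_{1},\quad h_{9}=-T_{22}h_{1},
\end{equation*}
and then
\begin{equation*}
h_{3}=-(T_{20}+T_{22})h_{1},\quad h_{7}=-(T_{02}+T_{22})h_{1},\quad h_{4}=-(T_{01}+T_{21})h_{1},\quad h_{2}=-(T_{10}+T_{12})h_{1}.
\end{equation*}
Substituting these into the remaining $(0,0)$-block equation collapses everything to the single condition
\begin{equation*}
(T_{00}+T_{20}+T_{02}+T_{22})\,h_{1}=0.
\end{equation*}

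The main step, the bivariate analogue of the identity $T=T_{0}+T_{2}$ used in Proposition \ref{prop:ML}, is the claim that $T_{00}+T_{20}+T_{02}+T_{22}$ is exactly the Toeplitz-block-Toeplitz matrix $T$. I would verify this by noting that the sum of the four blocks is precisely the $(0,0)$-block projection of multiplication by $\Tl$ followed by reduction modulo $(x^{2m}-1,\,y^{2n}-1)$: the block $(2,B)$ folds onto $(0,B)$ via $x^{2m}\equiv 1$ and $(A,2)$ folds onto $(A,0)$ via $y^{2n}\equiv 1$. Since $\Tl(\omega,\upsilon)=T(\omega,\upsilon)$ for $(\omega,\upsilon)\in\Unit{2m}\times\Unit{2n}$, this reduction agrees with $T$ acting on $h_{1}\in\kk[x,y]_{\substack{m-1\\n-1}}$, exactly as in the univariate calculation preceding Proposition \ref{division}.

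The hard part is keeping the book-keeping of signs and of the sixteen $T_{AB}$ blocks straight; once the identity $T_{00}+T_{20}+T_{02}+T_{22}=T$ is in hand, invertibility of the Toeplitz-block-Toeplitz matrix $T$ forces $h_{1}=0$, and the explicit formulas above then force $h_{2}=\cdots=h_{9}=0$. This yields $\ML(\Tv)\cap \kk[x,y]_{\substack{m-1\\n-1}}^{9}=\{0\}$, as claimed.
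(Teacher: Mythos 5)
Your proposal is correct and follows essentially the same route as the paper: reduce to showing the $9mn\times 9mn$ matrix $S$ of the map $\kk[x,y]_{\substack{m-1\\n-1}}^{9}\to\kk[x,y]_{\substack{3m-1\\3n-1}}$ is invertible, eliminate block rows using the identity blocks attached to $h_{2},\ldots,h_{9}$, and collapse to the condition that a sum of four $mn\times mn$ sub-blocks of the $\Tl$-column annihilates $h_{1}$, that sum being the Toeplitz-block-Toeplitz matrix $T$ itself. The paper abbreviates this to ``for the same reasons as in Proposition~\ref{prop:ML}''; you have simply made the two-dimensional analogue of $T_{0}+T_{2}=T$, namely $T_{00}+T_{20}+T_{02}+T_{22}=T$, and the accompanying block elimination explicit, which is exactly the intended argument.
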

\begin{proof}
We consider the map
\begin{eqnarray}\label{syzfunction2}
\kk[x,y]_{\substack{m-1\\n-1}}^9&\rightarrow & \kk[x,y]_{\substack{3m-1\\3n-1}}\\
p(x,y)=(p_1(x,y),\ldots,p_9(x,y))  & \mapsto & \Tv.p \\
\end{eqnarray}
which $9mn \times 9mn$ matrix is of the form 
\begin{equation}S:=\left(\begin{array}{c|c|c|c}
\begin{matrix}
\\
T_0\\
\end{matrix}
&
\begin{matrix}
E_{21}&-E_{11}+E_{31}\\
\vdots&\vdots\\
E_{2n}&-E_{1n}+E_{3n}
\end{matrix}
&
&
\begin{matrix}
-E_{11}&-E_{21}&E_{11}-E_{31}\\
\vdots&\vdots&\vdots\\
-E_{1n}&-E_{2n}&E_{1n}-E_{3n}
\end{matrix}
\\\hline
\begin{matrix}
\\
T_1\\
\end{matrix}
&
&
\begin{matrix}
E_{11}&E_{21}&-E_{11}+E_{31}\\
\vdots&\vdots&\vdots\\
E_{1n}&E_{2n}&-E_{1n}+E_{3n}
\end{matrix}
&
\\\hline
\begin{matrix}
\\
T_2\\
\end{matrix}
&&&
\begin{matrix}
E_{11}&E_{21}&-E_{11}+E_{31}\\
\vdots&\vdots&\vdots\\
E_{1n}&E_{2n}&-E_{1n}+E_{3n}
\end{matrix}
\end{array}\right)
\end{equation}
with $E_{ij}$ is the $3m\times mn$ matrix $e_{ij}\otimes I_m$ and $e_{ij}$ is the $3\times n$ matrix with entries equal zero except the $(i,j)$th entrie equal $1$. And the matrix $\begin{pmatrix}T_0\\T_1\\T_2 \end{pmatrix}$ is the following $9mn\times m$ matrix 
$$
\begin{pmatrix}
t_0&0&\ldots&0\\t_1&t_0&\ldots&0\\
\vdots&\ddots&\ddots&\vdots\\
t_{n-1}&\ldots&t_1&t_0\\
\hline
0&t_{n-1}&\ldots&t_1\\t_{-n+1}&0&\ddots&\vdots\\
\vdots&\ddots&\ddots&t_{-n+1}\\
t_{-1}&\ldots&t_{-n+1}&0\\
\hline
0&t_{-1}&\ldots&t_{-n+1}\\\vdots&\ddots&\ddots&\vdots\\
\vdots&&\ddots&t_{-1}\\
0&\ldots&\ldots&0\\
\end{pmatrix}
\textrm{ and }
t_i=\begin{pmatrix}
t_{i,0}&0&\ldots&0\\t_{i,1}&t_{i,0}&\ldots&0\\
\vdots&\ddots&\ddots&\vdots\\
t_{i,n-1}&\ldots&t_{i,1}&t_{i,0}\\
\hline
0&t_{i,m-1}&\ldots&t_{i,1}\\t_{i,-m+1}&0&\ddots&\vdots\\
\vdots&\ddots&\ddots&t_{i,-m+1}\\
t_{i,-1}&\ldots&t_{i,-m+1}&0\\
\hline
0&t_{i,-1}&\ldots&t_{i,-m+1}\\\vdots&\ddots&\ddots&\vdots\\
\vdots&&\ddots&t_{i,-1}\\
0&\ldots&\ldots&0\\
\end{pmatrix}
$$
For the same reasons in the proof of proposition \eqref{prop:ML} the matrix $S$ is invertible.
\end{proof}

\begin{theorem}\label{free:mod}
For any non-zero vector of polynomials $\ab=(a_{i})_{i=1,\ldots, n}\in \kk[x,y]^{n}$, the
$\kk[x,y]$-module $\ML(a_{1},\ldots, a_{n})$ is free of rank $n-1$.
\end{theorem}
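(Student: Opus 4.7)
The plan is to adapt the univariate argument used earlier (Hilbert's syzygy theorem plus a rank count) to the bivariate setting, and then invoke a freeness result to upgrade projectivity to freeness. First I would reduce to the case $\gcd(a_{1},\ldots,a_{n}) = 1$: if $d = \gcd(\ab)$ and $a_{i} = d\, b_{i}$, then since $d$ is a nonzerodivisor in the domain $\kk[x,y]$, one has $\sum a_{i}\, h_{i} = 0 \iff \sum b_{i}\, h_{i} = 0$, so $\ML(a_{1},\ldots,a_{n}) = \ML(b_{1},\ldots,b_{n})$. In particular one may also assume all $b_{i} \neq 0$, since a zero entry would simply contribute a trivial free summand to the syzygy module and both sides of the claimed rank identity would shift consistently.

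Next, with the coprime hypothesis in hand, I would argue that the ideal $I = (a_{1},\ldots,a_{n})$ has projective dimension at most $1$ in $\kk[x,y]$. The key point is that $\kk[x,y]$ is a UFD of Krull dimension $2$ whose height-one primes are principal; coprimality of $\ab$ forces $I$ to be contained in no height-one prime, so either $I = \kk[x,y]$ or $I$ has height $2$. In the height-two case, $\kk[x,y]/I$ is Artinian of depth $0$; combining Hilbert's syzygy theorem (which bounds $\mathrm{pd}_{\kk[x,y]}(\kk[x,y]/I) \le 2$) with the Auslander-Buchsbaum formula then pins it at $2$, so $\mathrm{pd}(I) = 1$. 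From the exact sequence
$$
0 \to \ML(\ab) \to \kk[x,y]^{n} \to I \to 0
$$
and this bound we conclude that $\ML(\ab)$ is projective. Seshadri's theorem (the two-variable case of Quillen-Suslin) then upgrades projective to free over $\kk[x,y]$, and rank additivity in the short exact sequence, using $\mathrm{rank}(I) = 1$, gives $\mathrm{rank}(\ML(\ab)) = n - 1$.

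The main obstacle is justifying freeness of projective modules over $\kk[x,y]$ in an elementary way, since Quillen-Suslin is a deep result. An alternative closer to the explicit spirit of Proposition~\ref{prop:ML} and Proposition~\ref{syzygy:2d:deg} would be to construct $n-1$ generators of $\ML(\ab)$ directly, starting from Koszul-type syzygies $(-b_{j}, b_{i}, 0, \ldots, 0)$ obtained from pairwise-gcd factorizations of the $a_{i}$, and then to verify linear independence and completeness by a dimension count of graded pieces (as in the proof of Proposition~\ref{syzygy:2d:deg}). This route would sidestep Quillen-Suslin but at the price of substantial additional combinatorial bookkeeping and case analysis.
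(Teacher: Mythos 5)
Your proposal is correct in substance but takes a genuinely different route from the paper. The paper advertises its proof as a \emph{new elementary proof}, and indeed it deliberately avoids the Quillen--Suslin/Seshadri theorem. Its argument proceeds in two steps: first the monomial case, where the $a_i = x^{\alpha_i}y^{\beta_i}$ are sorted lexicographically and it is shown that the \emph{consecutive} $S$-polynomials $S(a_i,a_{i+1})$, $i=1,\ldots,n-1$, already generate all of $\ML(\ab)$ (any $S(a_i,a_k)$ reduces against the intermediate ones in two variables); since each $S(a_i,a_{i+1})$ involves the basis vectors $\sigma_i,\sigma_{i+1}$, they are $\kk[x,y]$-independent, so $\ML(\ab)$ is free of rank $n-1$ when the $a_i$ are monomials. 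Second, for general $a_i$ the paper passes to a Gr\"obner basis, takes the leading-term resolution from the monomial case, and invokes the standard deformation-of-resolutions result to transfer it to a free resolution $0 \to \kk[x,y]^p \to \kk[x,y]^n \to (\ab) \to 0$, after which the rank $p=n-1$ is pinned down by the same alternating-sum-of-dimensions count you use.

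Two remarks on your version. First, the Auslander--Buchsbaum/height-two detour (and hence the reduction to $\gcd(\ab)=1$) is not needed for what you ultimately want: Hilbert's syzygy theorem already gives $\mathrm{pd}_{\kk[x,y]}(\kk[x,y]/I)\le 2$ for \emph{any} ideal $I$, hence $\mathrm{pd}(I)\le 1$ and projectivity of $\ML(\ab)$ follows directly from the short exact sequence $0\to\ML(\ab)\to\kk[x,y]^n\to I\to 0$, exactly as in the paper's own univariate Theorem 2.2. What you really buy with the coprimality reduction is that $\mathrm{pd}(\kk[x,y]/I)$ is exactly $2$, but equality plays no role. Second, your closing ``alternative'' via Koszul-type syzygies $(-b_j,b_i,0,\ldots,0)$ from pairwise gcd's is in the explicit spirit of the paper but is not what the paper actually does, and would run into the problem that the $\binom{n}{2}$ Koszul syzygies are far from independent; the paper's resolution of this is precisely the observation that in two variables, consecutive $S$-pairs of the lex-sorted leading monomials suffice and are automatically independent. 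The trade-off, in the end, is clean: your route is shorter and conceptually transparent but leans on Quillen--Suslin; the paper's route is longer, Gr\"obner-theoretic, and constructs a basis by hand, which is what makes it qualify as ``elementary'' and what later feeds into the explicit generator construction for $\ML(\Tv)$ in Proposition 3.4.
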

\begin{proof}
Consider first the case where $a_{i}$ are monomials. 

$a_{i}= x^{\alpha_{i}}y^{\beta_{i}}$ 
that are sorted in lexicographic order such that $x<y$, $a_{1}$ being the
biggest and $a_{n}$ the smallest. 
Then the module of syzygies of $\ab$ is generated by 
the $S$-polynomials:
$$ 
S(a_{i},a_{j}) = \lcm(a_{i},a_{j})({\sigma_{i}\over a_{i}}- {\sigma_{j}\over
a_{j}}),
$$
where $(\sigma_{i})_{i=1,\ldots,n}$ is the canonical basis of
$\kk[x,y]^{n}$ \cite{MR1322960}. We easily check that 
$S(a_{i},a_{k}) = 
  {\lcm(a_{i},a_{k})\over\lcm(a_{i},a_{j})} S(a_{i},a_{j}) 
- {\lcm(a_{i},a_{k})\over\lcm(a_{j},a_{k})} S(a_{j},a_{k})$
if $i\neq j \neq k$ and $\lcm (a_{i},a_{j})$ divides $\lcm(a_{i},a_{k})$. 
Therefore $\ML(\ab)$ is generated by the $S(a_{i},a_{j})$ which are minimal
for the division, that is, by $S(a_{i},a_{i+1})$ (for $i=1,\ldots,n-1$),
since the monomials $a_{i}$ are sorted lexicographically. As the syzygies
$S(a_{i},a_{i+1})$ involve the basis elements $\sigma_{i},\sigma_{i+1}$, 
they are linearly independent over $\kk[x,y]$, which shows that 
$\ML(\ab)$ is a free module of rank $n-1$ and that we have the following
resolution:
$$ 
0 \rightarrow \kk[x,y]^{n-1} \rightarrow \kk[x,y]^{n} \rightarrow
(\ab) \rightarrow 0.
$$

Suppose now that $a_{i}$ are general polynomials $\in \kk[x,y]$ and let us
compute a Gr\"obner basis of $a_{i}$, for a monomial ordering refining the 
degree \cite{MR1322960}. 
We denote by $m_{1},\ldots, m_{s}$ the leading terms of the polynomials in 
this Gr\"obner basis, sorted by lexicographic order.

The previous construction yields a resolution of $(m_{1}$, $\ldots$, $m_{s})$:
$$ 
0 \rightarrow \kk[x,y]^{s-1} \rightarrow \kk[x,y]^{s} \rightarrow 
(m_{i})_{i=1,\ldots,s} \rightarrow 0.
$$
Using \cite{MR839576} (or \cite{MR1322960}), 
this resolution can be deformed into a resolution 
of $(\ab)$, of the form
$$ 
 0 \rightarrow \kk[x,y]^{p} \rightarrow \kk[x,y]^{n} \rightarrow 
(\ab) \rightarrow 0,
$$
which shows that $\ML(\ab)$ is also a free module. Its rank $p$
is necessarily equal to $n-1$, since the alternate sum of the dimensions
of the vector spaces of elements of degree $\le \nu$  in each module of this
resolution should be $0$, for $\nu \in \NN$.
\end{proof}

\subsection{Generators and reduction}
In this section, we describe an explicit set of generators of $\ML(\Tv)$.
The canonical basis of $\kk[x,y]^9$ is denoted by $\sigma_1,\ldots, \sigma_9$.

First as $\Tl(x,y)$ is of degree $\le 2\,m-1$ in $x$ and $\le 2\, n-1$ in $y$ and as the function \eqref{syzfunction2} in surjective, there
exists $u_1, u_2 \in \kk[x,y]_{\substack{m-1\\n-1}}^9$ such that 
$\Tv\cdot u_1 = \Tl(x,y) x^m$, $\Tv\cdot u_2 = \Tl(x,y) y^n$. Thus,
$$
\begin{array}{l}
\rho_{1} = x^m \sigma_1 - u_1 \in \ML(\Tv),\\
\rho_{2} = y^n \sigma_1 - u_2 \in \ML(\Tv).
\end{array} 
$$
We also have $u_3 \in \kk[x,y]_{\substack{m-1\\n-1}}$, such that
$\Tv\cdot u_3 = 1 = x^m \, x^m - (x^{2\,m}-1) =  y^n\, y^n - (y^{2\,n} -1)$. We deduce that
$$
\begin{array}{l}
\rho_{3} = x^m \sigma_2 - \sigma_3 -u_3  \in \ML(\Tv),\\
\rho_{4} = y^n \sigma_4 - \sigma_7 -u_3  \in \ML(\Tv).
\end{array} 
$$
Finally, we have the obvious relations:
$$
\begin{array}{l}
\rho_{5} = y^n \sigma_2 - \sigma_5  \in \ML(\Tv),\\
\rho_{6} = x^m \sigma_4 - \sigma_5  \in \ML(\Tv), \\
\rho_{7} = x^m \sigma_5 - \sigma_6  + \sigma_4 \in \ML(\Tv), \\
\rho_{8} = y^n \sigma_5 - \sigma_8  + \sigma_2 \in \ML(\Tv). \\
\end{array} 
$$
\begin{proposition}\label{basis}
The relations $\rho_{1},\ldots,\rho_{8}$ form a basis of $\ML(\Tv)$.
\end{proposition}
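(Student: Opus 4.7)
The plan is to verify that $\{\rho_1,\ldots,\rho_8\}$ is $\kk[x,y]$-linearly independent and generates $\ML(\Tv)$; together with Theorem \ref{free:mod}, which guarantees that $\ML(\Tv)$ is free of rank $9-1=8$, these two properties confirm the basis claim.

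For linear independence, I would exploit the distinguished leading monomial that each $\rho_i$ carries, which is available because the tails $u_1,u_2,u_3$ live in $\kk[x,y]_{\substack{m-1\\n-1}}^9$: namely $x^m\sigma_1$ appears in $\rho_1$, $y^n\sigma_1$ in $\rho_2$, $x^m\sigma_2$ in $\rho_3$, $y^n\sigma_2$ in $\rho_5$, $x^m\sigma_4$ in $\rho_6$, $y^n\sigma_4$ in $\rho_4$, $x^m\sigma_5$ in $\rho_7$, and $y^n\sigma_5$ in $\rho_8$, while every other monomial appearing in any $\rho_i$ has bidegree strictly below $(m,n)$ in its component. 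From a hypothetical relation $\sum_{i=1}^{8} f_i\rho_i=0$ I would read off the $\sigma_1$-component, where the high-$x$ and high-$y$ contributions can only come from $f_1\rho_1$ and $f_2\rho_2$, forcing $f_1=f_2=0$; iterating the same argument on the $\sigma_2$-, $\sigma_4$-, $\sigma_5$-components then eliminates $f_3,f_5$, then $f_4,f_6$, then $f_7,f_8$.

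For generation, the goal is to reduce an arbitrary $h=(h_1,\ldots,h_9)\in\ML(\Tv)$ modulo $\rho_1,\ldots,\rho_8$ until it lands in $\kk[x,y]_{\substack{m-1\\n-1}}^9$, at which point Proposition \ref{syzygy:2d:deg} forces it to vanish. Concretely, I would first use $\rho_1,\rho_2$ to push $h_1$ into $\kk[x,y]_{\substack{m-1\\n-1}}$, then $\rho_3,\rho_5$ on $h_2$, $\rho_6,\rho_4$ on $h_4$, and $\rho_7,\rho_8$ on $h_5$, noting that the side-effects on the other components stay under control because $u_1,u_2,u_3$ themselves already lie in $\kk[x,y]_{\substack{m-1\\n-1}}^9$. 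To dispose of the remaining components $h_3,h_6,h_7,h_8,h_9$, I would exploit that $\Tv_3,\Tv_6,\Tv_7,\Tv_8,\Tv_9$ are explicit $\kk[x,y]$-combinations of $\Tv_2,\Tv_4,\Tv_5$ via $\rho_3,\rho_4,\rho_5,\rho_6,\rho_7,\rho_8$: substituting each $\sigma_j$ ($j\geq 3$) modulo these $\rho_i$ by an expression in $\sigma_2,\sigma_4,\sigma_5$ plus bounded-bidegree tails, I would re-run the previous reductions and iterate.

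The main obstacle will be termination of this iteration: each substitution of some $\sigma_j$ ($j\geq 3$) pushes material back into $\sigma_2,\sigma_4,\sigma_5$, so I must exhibit a measure on $\kk[x,y]^9$ that strictly decreases with each pass---for instance a weighted bidegree that penalizes $\sigma_j$ for $j\geq 3$ more heavily than any single reduction step can produce. A cleaner route I would fall back on is to select a term order on $\kk[x,y]^9$ in which the eight leading monomials listed above are exactly the leading terms of $\rho_1,\ldots,\rho_8$, and verify that $\{\rho_1,\ldots,\rho_8\}$ is a Gr\"obner basis of $\ML(\Tv)$; Proposition \ref{syzygy:2d:deg} supplies the Hilbert-function match needed to confirm that the basis is complete, yielding independence and generation simultaneously.
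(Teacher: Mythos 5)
Your overall approach relies on the same two pillars as the paper's proof — Proposition~\ref{syzygy:2d:deg} (no nonzero syzygy is entirely in $\kk[x,y]_{\substack{m-1\\n-1}}^9$) and Theorem~\ref{free:mod} (the module is free of rank $8$) — but you deviate in a way that costs you extra work and creates a gap you then struggle to close.

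First, the separate leading-term argument for linear independence is unnecessary. The standard fact that $r$ generators of a free module of rank $r$ over a commutative ring automatically form a basis means that \emph{generation alone}, combined with Theorem~\ref{free:mod}, finishes the proof. This is exactly the shortcut the paper uses in its last line. Besides being redundant, your independence argument is also not airtight as stated: reading off the $\sigma_1$-component of $\sum f_i\rho_i=0$ gives $f_1(x^m-(u_1)_1)+f_2(y^n-(u_2)_1)-(f_3+f_4)(u_3)_1=0$, and asserting that the ``high'' contributions come only from $f_1x^m$ and $f_2y^n$ already needs a carefully chosen term order and a cancellation analysis that you do not carry out.

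Second, on generation, you correctly flag termination as the danger, but the paper sidesteps most of it by a structural observation you miss. The paper reduces only $h_1,h_2,h_4,h_5$ to $\kk[x,y]_{\substack{m-1\\n-1}}$. Once that is done, the polynomial $P=\Tl h_1+x^m h_2+y^n h_4+x^m y^n h_5$ lies in the ideal $(x^{2m}-1,y^{2n}-1)$ and is of bidegree $\le(3m-1,3n-1)$; the invertibility of the matrix $S$ from Proposition~\ref{syzygy:2d:deg} then yields bounded $h_3',h_6',\dots,h_9'$ with $\Tv\cdot(h_1,h_2,h_3',h_4,h_5,h_6',\dots,h_9')=0$, and Proposition~\ref{syzygy:2d:deg} forces this bounded syzygy to vanish. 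Your version instead tries to push all nine components into $\kk[x,y]_{\substack{m-1\\n-1}}$ by iterated substitutions of $\sigma_j$ ($j\ge3$), which is why you run into the ``ping-pong'' termination obstacle; the paper never reduces the $\sigma_3,\sigma_6,\dots,\sigma_9$ components at all.

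Third, the Gr\"obner-basis fallback is not supported: you posit a term order on $\kk[x,y]^9$ under which $x^m\sigma_1,y^n\sigma_1,x^m\sigma_2,y^n\sigma_4,y^n\sigma_2,x^m\sigma_4,x^m\sigma_5,y^n\sigma_5$ are leading terms, but do not exhibit one, and producing it is genuinely delicate. For instance, $\rho_4=y^n\sigma_4-\sigma_7-u_3$ with $(u_3)_1\neq0$ needs $y^n\sigma_4 > (u_3)_1\sigma_1$, which fails under any position-over-term order favoring $\sigma_1$; yet $\rho_1=x^m\sigma_1-u_1$ with $(u_1)_4\neq0$ needs $x^m\sigma_1>(u_1)_4\sigma_4$, pushing $\sigma_1$ up. Resolving this tension needs a weighted order whose existence you do not verify, so the ``Hilbert-function match'' you invoke rests on an unestablished premise.
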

\begin{proof}
Let $\hb=(h_1,\ldots,h_9)\in \ML(\Tv)$.
By reduction by the previous elements of $\ML(\Tv)$, we can assume that
the coefficients $h_{1}, h_{2}, h_{4}, h_{5}$ are in $\kk[x,y]_{\substack{m-1\\n-1}}$.
Thus, $\Tl(x,y) h_{1} + x^{m} h_{2} + y^{n} h_{4} + x^{m} y^{n} h_{5} \in (x^{2\,n}-1,y^{2\,m}-1)$.
As this polynomial is of degree $\le 3\,m-1$ in $x$ and $\le 3\, n-1$ in $y$, by reduction by 
the polynomials, we deduce that the coefficients $h_{3},h_{6},\ldots,h_{9}$ are in $\kk[x,y]_{\substack{m-1\\n-1}}$.
By proposition \ref{syzygy:2d:deg}, there is no non-zero syzygy in $\kk[x,y]_{\substack{m-1\\n-1}}^{9}$. Thus we have 
$\hb=0$ and every element of $\ML(\Tv)$ can be reduced to $0$ by the previous relations.
In other words, $\rho_{1},\ldots, \rho_{8}$ is a generating set of 
the $\kk[x,y]$-module $\ML(\Tv)$. By theorem \ref{free:mod}, the relations
$\rho_{i}$ cannot be dependent over $\kk[x,y]$ and thus form a basis of
$\ML(\Tv)$. 
\end{proof}

\subsection{Interpolation}
Our aim is now to compute efficiently a system of generators of $\ML(\Tv)$. 

More precisely, we are interested in computing the coefficients of
$\sigma_{1},\sigma_{2},\sigma_{4}, \sigma_{5}$ of
$\rho_{1},\rho_{2},\rho_{3}$. Let us call $B(x,y)$ the corresponding
coefficient matrix, which is of the form: 
\begin{equation}\label{form:B}
\left( \begin{array}{cccc}
x^m & y^n & 0    \\
0  &  0  & x^{m} \\
0  &  0  & 0     \\
0  &  0  & 0     \\
\end{array}
\right)
+
\kk[x,y]_{\substack{m-1\\n-1}}^{4,3}
\end{equation}
Notice that the other coefficients of the relations
$\rho_{1},\rho_{2},\rho_{3}$ correspond to elements in the ideal
$(x^{2\,m}-1,y^{2\,n}-1)$ and thus can be obtained easily by reduction of the
entries of $(\Tl(x,y),x^m,y^n,x^m\,y^n)\cdot B(x,y)$ by the polynomials
$x^{2\,m}-1,y^{2\,n}-1$.

Notice also that the relation $\rho_{4}$ can be easily deduced from
$\rho_{3}$, since we have $\rho_{3} -x^{m}\sigma_{2} +\sigma_{3}+y^n\,
\sigma_{4} -\sigma_{7} = \rho_{4}$.  Since the other relations $\rho_{i}$
(for $i>4$) are explicit and independent of $\Tl(x,y)$, we can easily deduce
a basis of $\ML(\Tv)$ from the matrix $B(x,y)$.

As in $\ML(\Tv)\cap\kk[x,y]_{\substack{m-1\\n-1}}$ there is only one element, thus by computing the basis given in proposition \eqref{basis} and reducing it we can obtain this element in $\ML(\Tv)\cap\kk[x,y]_{\substack{m-1\\n-1}}$ which gives us the solution of $Tu=g$. We can give a fast algorithm to do these two step, but a superfast algorithm is not available.

\section{Conclusions}
We show in this paper a correlation between the solution of a Toeplitz system and the syzygies of polynomials. We generalized this way, and we gave a correlation between the solution of a Toeplitz-block-Toeplitz system and the syzygies of bivariate polynomials. In the univariate case we could exploit this correlation to give a superfast resolution algorithm. The generalization of this technique to the bivariate case is not very clear and it remains an important challenge.

\bibliographystyle{abbrv}
\bibliography{matrstr}
\end{document}